\theoremstyle{plain}
\newtheorem{theorem}{Theorem}
\newtheorem{lemma}{Lemma}
\theoremstyle{definition}
\newtheorem{remark}{Remark}
\newcommand{\N}{\mathbb N}
\newcommand{\X}{X_0^{s,p}(\Omega)}
\newcommand{\G}{\mathbb G}
\newcommand{\Om}{\Omega}
\long\def\symbolfootnote[#1]#2{\begingroup
\def\thefootnote{\fnsymbol{footnote}}\footnote[#1]{#2}\endgroup}
\numberwithin{equation}{section}
\numberwithin{equation}{section}
\begin{document}
\title[Subelliptic Critical Equations: Bifurcation, Multiplicity and Eigenvalues] {Critical Equations Involving Nonlocal Subelliptic Operators on Stratified Lie Groups: Spectrum, Bifurcation and Multiplicity}
\author{Sekhar Ghosh and Vishvesh Kumar }
\address[ Sekhar Ghosh]{Department of Mathematics, National Institute of Technology Calicut, Kozhikode, Kerala, India - 673601}
\email{sekharghosh1234@gmail.com / sekharghosh@nitc.ac.in}
\address[Vishvesh Kumar]{Department of Mathematics: Analysis, Logic and Discrete Mathematics, Ghent University, Ghent-9000, Belgium}
\email{vishveshmishra@gmail.com / vishvesh.kumar@ugent.be}

\thanks{{\em 2020 Mathematics Subject Classification: } 35R03, 35H20, 22E30, 35J20, 35R11}

\keywords{Fractional sub-Laplacian, Stratified Lie groups, Variational methods, Brezis-Nirenberg problems, Bifurcation phenomena, Critical nonlinearity, Eigenvalue problem, the Heisenberg group}

 \vskip -1cm  \hrule \vskip 1cm \vspace{-8pt}

\begin{abstract} 
In this paper, we explore the bifurcation phenomena and establish the existence of multiple solutions for the nonlocal subelliptic Brezis-Nirenberg problem:
 \begin{equation*} 
      \begin{cases} 
(-\Delta_{\mathbb{G}})^s u=  |u|^{2_s^*-2}u+\lambda u \quad &\text{in}\quad \Omega, \\
u=0\quad & \text{in}\quad \mathbb{G}\backslash \Omega,
\end{cases}
  \end{equation*}
where $(-\Delta_{\mathbb{G}})^s$ is the fractional sub-Laplacian on the stratified Lie group $\mathbb{G}$ with homogeneous dimension $Q,$ $\Omega$ is a open bounded subset of $\mathbb{G},$ $s \in (0,1)$,  $Q> 2s,$ $2_s^*:=\frac{2Q}{Q-2s}$ is subelliptic fractional Sobolev critical exponent, $\lambda>0$ is a real parameter. This work extends the seminal contributions of Cerami, Fortunato, and Struwe to nonlocal subelliptic operators on stratified Lie groups. A key component of our study involves analyzing the subelliptic $(s, p)$-eigenvalue problem for the (nonlinear) fractional $p$-sub-Laplacian $(-\Delta_{p,{\mathbb{G}}})^s$
\begin{align*} 
	(-\Delta_{p,{\mathbb{G}}})^s u&=\lambda |u|^{p-2}u,~\text{in}~\Omega,\nonumber\\
	u&=0~\text{ in }~{\mathbb{G}}\setminus\Omega,
\end{align*}
with $0<s<1<p<\infty$ and $Q>ps$,  over the  fractional Folland-Stein-Sobolev spaces on stratified Lie groups applying variational methods. Particularly, we prove that the $(s, p)$-spectrum of $(-\Delta_{p,{\mathbb{G}}})^s$ is closed and the second eigenvalue $\lambda_2(\Omega)$ with $\lambda_2(\Omega)>\lambda_1(\Omega)$ is well-defined and provides a variational characterization of $\lambda_2(\Omega)$. We emphasize that the results obtained here are also novel for $\mathbb{G}$ being the Heisenberg group.

\end{abstract}

 \maketitle 
{ \textwidth=4cm \hrule}
\tableofcontents

\section{Introduction}
\setcounter{theorem}{0}\setcounter{lemma}{0}\setcounter{definition}{0}\setcounter{proposition}{0}\setcounter{remark}{0}

	  The main aim of this paper is to study the following nonlocal subelliptic equation involving critical nonlinearities, known as the nonlocal subelliptic Brezis-Nirenberg problem: 
  \begin{equation} \label{pro1intro}
      \begin{cases} 
(-\Delta_{\mathbb{G}})^s u=  |u|^{2_s^*-2}u+\lambda u \quad &\text{in}\quad \Omega, \\
u=0\quad & \text{in}\quad \mathbb{G}\backslash \Omega,
\end{cases}
  \end{equation}
where $(-\Delta_{\mathbb{G}})^s$ is the fractional sub-Laplacian on the stratified Lie group $\mathbb{G}$ with homogeneous dimension $Q,$ $\Omega$ is a open bounded subset of $\mathbb{G},$ $s \in (0,1)$,  $Q> 2s,$ $2_s^*:=\frac{2Q}{Q-2s}$ is subelliptic fractional Sobolev critical exponent, $\lambda>0$ is a real parameter.  The operator $(-\Delta_{\mathbb{G}})^s$ is defined as 
    \begin{equation}
        (-\Delta_{\mathbb{G}})^su(x):= C(Q,s)\,\,  P.V. \int_{{\mathbb{G}} } \frac{(u(x)-u(y))}{\left|y^{-1} x\right|^{Q+2 s}} d y, \quad x \in {\mathbb{G}},
    \end{equation}
    with $|\cdot|$ being a homogenous norm on the stratified Lie group $\G,$ $C(Q,s)$ is the positive normalization constant depending only on the homogeneous dimension $Q$ of $\G$ and  $s \in (0, 1)$. Here, the symbol $P. V.$ stands for the Cauchy principal value. 
Recently, there has been considerable interest in nonlocal subelliptic operators, such as the fractional sub-Laplacian, on the Heisenberg group and more broadly on stratified Lie groups. These operators are notable for their fascinating theoretical structures and practical applications. Although it is impossible to provide an exhaustive list of references, we recommend \cite{ FF2015, PP22, GKR22, GKR24, KD20, GKR23,  GT21, GLV, K20,   WN19, RT16, FZ24,  RT20} and the references cited therein.
The fractional sub-Laplacian $(-\Delta_\G)^s$ has been extensively studied and found to be connected with various areas of mathematics. This operator, initially defined for $u \in C_c^\infty(\G)$, has the following integral representation:
\begin{equation}\label{fracl}
(-\Delta_\G)^su(x):= \lim_{\epsilon \rightarrow 0} \int_{\G \backslash B(x, \epsilon)} \frac{|u(x)-u(y)|}{|y^{-1}x|^{Q+2s}} d y = C(Q, s)\,\, P.V. \int_{\G} \frac{|u(x)-u(y)|}{|y^{-1}x|^{Q+2s}} d y.
\end{equation}
It is known that for an $H$-type group, the operator $(-\Delta_\G)^s,$ $s \in (0, \frac{1}{2}),$ is a multiple of the pseudo-differential operator defined as:
\begin{equation}
\mathcal{L}_s:=2^s (-\Delta_z)^{\frac{s}{2}} \frac{\Gamma (-\frac{1}{2}\Delta_\G (-\Delta_z)^{-\frac{1}{2}}+\frac{1+s}{2}) }{\Gamma (-\frac{1}{2}\Delta_\G (-\Delta_z)^{-\frac{1}{2}}+\frac{1-s}{2})},
\end{equation}
where $-\Delta_z$ is the positive Laplacian on the center of the $H$-type group $\G$, and $-\Delta_\G$ denotes the sub-Laplacian on $\G$. For more details, refer to \cite{BF13, RT16, RT20}. It is worth noting that $\mathcal{L}_s$ is a ``conformal invariant" operator with significant applications in CR geometry (see \cite{FMMT15}).  Moreover, we have $$\lim_{s \rightarrow 1^-} (-\Delta_\G)^s u = -\Delta_\G u $$ for all $u \in C_c^\infty(\G)$ (see \cite[Proposition 1]{PP22}). In a recent study, Garofalo and Tralli \cite{GT21} explicitly computed the fundamental solutions of the nonlocal operators $(-\Delta_\G)^s$ and $(- \Delta_{\G}^su)$ and established intertwining formulas on $H$-type groups.

 Brezis and Nirenberg in their seminal paper \cite{BN83} considered  the following nonlinear elliptic partial differential critical equation:
\begin{equation} \label{EucBN}
    \begin{cases}
        &-\Delta u = |u|^{2^*-2} u+\lambda u \quad \text{in} \,\, \Omega,\\&
        u=0 \quad \text{on} \,\, \partial \Omega
    \end{cases} 
\end{equation} for $n\geq 3$ and $2^*:= \frac{2n}{n-2},$ where $\Omega$ is a smooth bounded domain of $\mathbb{R}^n$ and $\lambda$ is a real parameter. They established the existence of a positive solution for $n \geq 3$ and $\lambda \in (0, \lambda_1)$, where $\lambda_1$ denotes the first eigenvalue of $`-\Delta'$ in $H^1_0(\Omega)$. Later, Capozzi et al. \cite{CFP85} established that for $n \geq 4$, equation \eqref{EucBN} admits a nontrivial solution for every parameter $\lambda$. Following \cite{BN83}, similar problems have been extensively studied for different classes of (pseudo)-differential operators due to their connection with issues in differential geometry and physics, where a lack of compactness occurs (see, for instance, the famous Yamabe problem). We refer   \cite{BS15, MRS16,  MM17, SER13, SER14, SERV05, SERV13} and the references therein.

The first multiplicity result of the Brezis-Nirenberg problem was established by Cerami, Fortunato, and Struwe \cite{CFS84}. They showed that in an appropriate left neighborhood of any eigenvalue of $-\Delta$ (with Dirichlet boundary data), the number of solutions is at least twice the multiplicity of the eigenvalue. The authors also provided an estimate of the length of this neighborhood, which depends on the best critical Sobolev constant, the Lebesgue measure of the domain where the problem is defined, and the space dimension. In 2015, Fiscella, Molica Bisci and Servadei \cite{FMS16} extended the result of \cite{CFS84} in the nonlocal setting. They studied the following problem: 
\begin{equation} \label{EucFBN}
    \begin{cases}
        &(-\Delta)^s u = |u|^{2_s^*-2} u+\lambda u \quad \text{in} \,\, \Omega,\\&
        u=0 \quad \text{on} \,\, \mathbb{R}^n \backslash \Omega,
    \end{cases} 
\end{equation} where $s \in (0, 1),$ $\Omega$ is an open subset on $\mathbb{R}^n, n>2s,$ $2_s^*:=2n/ (n-2s)$ is the fractional critical Sobolev exponent and $\lambda$ is a real parameter. Assume that $\lambda_k< \lambda <\lambda_{k+1},$ $$\lambda>\lambda_{k+1}-S_{2_s^*, \Omega}|\Omega|^{-\frac{2s}{n}}$$ and $m$ denotes the multiplicity of $\lambda_{k+1},$ where $0<\lambda_1<\lambda_2<\cdots \rightarrow \infty$ are the eigenvalues (repeated with multiplicity) of $(-\Delta)^s$. Then, problem \eqref{EucFBN} has at least $m$ pairs of nontrivial solutions $\pm u_{\lambda, i}$ such that $\|u_{\lambda, i}\|_{X_0^{s,2}(\Omega)} \rightarrow 0$ as $\lambda$ approaches $\lambda_{k+1}$ for any $i=1,2,\ldots, m.$ There have been several studies about the bifurcation and multiplicity results, we refer to \cite{FMS18,PSY16, MRS16} and reference therein for a complete overview. 

In this paper, we are interested in the Brezis-Nirenberg problem involving the fractional sub-Laplacian on stratified Lie groups. The motivation for considering the Brezis-Nirenberg type problem on stratified Lie groups, in particular, on the Heisenberg group, coming from it application to CR geometry.  The {\it CR Yamabe problem} was first explored in the seminal work by Jerison and Lee \cite{JL}.  The CR Yamabe problem is: 

{\it Given a compact, strictly pseudoconvex CR manifold, find a choice of contact form for which
the pseudohermitian scalar curvature is constant.} 

The Heisenberg group, the simplest type of stratified Lie group, is to CR geometry what Euclidean space is to conformal geometry. This similarity has made the analysis on stratified Lie groups essential for solving the CR Yamabe problem, which has led to significant interest in studying subelliptic PDEs on these groups. This research is also connected to finding the best constants for the Sobolev inequality on the Heisenberg group, initially studied by Folland and Stein \cite{FS82} and later by Jerison and Lee \cite{JL89}. For more on the best constants of functional inequalities on stratified Lie groups and related subelliptic variational problems, see \cite{GKR23} and its references. 
 On the other hand, the Heisenberg group appears in various fields, such as quantum theory (notably in contexts like the uncertainty principle and commutation relations) \cite{Car66, Div97}, signal theory \cite{Sch86}, the theory of theta functions \cite{Car66, Zel97}, and even number theory. Additionally, the seminal work of Rothschild and Stein \cite{RS76} highlighted the vital role of nilpotent Lie groups in providing precise subelliptic estimates for differential operators on manifolds. Their lifting theorem, known as the Rothschild-Stein theorem, demonstrates that a broad range of differential operators, particularly Hörmander's sums of squares of vector fields on manifolds, can be effectively approximated by a sub-Laplacian defined on certain stratified Lie groups.

For the sub-Laplacian on stratified Lie group, the Brezis-Nirenberg problem 
\begin{equation} \label{BNLocal}
    \begin{cases}
        -\Delta_{\G} u=|u|^{2^*-2} u+\lambda u \quad &\text{in} \quad \Omega, \\ u=0 \quad &\text{in} \quad \partial\Omega,
         \end{cases}
\end{equation}
where  $2^*=2Q/(Q-2)$ is the Folland-Sobolev critical exponent, was studied by  Loiudice \cite{L07} extending the Euclidean results in \cite{BN83}. She proved that \eqref{BNLocal} has at least a positive solution in $S^1_0(\Omega)$  for any $0<\lambda<\lambda_1,$ where $\lambda_1$ is the first eigenvalue of $-\Delta_{\G}$ in $S^1_0(\Omega).$ It is also known that for $\lambda \geq \lambda_1,$ there is no positive solution of \eqref{BNLocal}. Before that, the problem \eqref{BNLocal} was studied by Citti \cite{Citti95} in the setting of the Heisenberg group using the explicit form of minimizers of subelliptic Sobolev inequality due to Jerison and Lee \cite{JL89}. However, in the context of Carnot groups, the explicit form of the Sobolev extremals is not known, posing a challenge for applying the Brezis-Nirenberg approach. To address this, Loiudice \cite{L07} utilised the exact behaviour of any extremal function of the Folland-Stein Sobolev inequality by means of the deep analysis performed by Bonfiglioli and Uguzzoni \cite{BU04}.  We also refer to several other interesting papers \cite{GV00, Lan03, BR17, GL92, BPV22, MBF16, MM07, BFP20a} and reference therein dealing with subelliptic nonlinear equations on stratified Lie groups. 

Regarding the multiplicity results of positive solutions to \eqref{BNLocal},  Loiudice \cite{L07} extended the seminal paper of  Cerami, Fortunato, and Struwe \cite{CFS84} in the setting of stratified Lie groups. She proved that problem \eqref{BNLocal} admit bifurcation from any eigenvalue $\lambda_j$ of $-\Delta_{\G}$ in the Folland-Stein Sobolev space $S^1_0(\Omega).$ More precisely, for $\lambda$ belonging to a suitable left neighborhood $(\lambda_j^*, \lambda_j)$ for each $\lambda_j,$ the number of nontrivial solutions is at least twice the multiplicity of $\lambda_j.$ Pucci \cite{P19} studied the existence and multiplicity results for quasilinear equations in the Heisenberg group. In \cite{KD20I}, Kassymov and Suragan established a multiplicity result for a subcritical subelliptic problem involving Hardy potential on stratified Lie groups.

 In \cite{GKR22} with Ruzhansky, we investigated an eigenvalue problem for $(-\Delta_{p,\mathbb{G}})^s$ on fractional Folland-Stein-Sobolev spaces $X_0^{s,p}(\Omega)$ using the variational method. In \cite{GKR22}, we also have established the compactness of fractional Folland-Stein-Sobolev embeddings on $\G$ which serve as a crucial tool to study nonlocal equations. For further exploration of the best constants of such fractional Folland-Stein-Sobolev inequalities, we recommend consulting \cite{GKR23, GGKR25}. In \cite{GKR24} we considered the Brezis-Nirenberg problem
\eqref{pro1intro} associated with a nonlocal subelliptic operator on $\G$ and prove the existence of at least one position solution in $X_0^{s,2}(\Omega)$ for $\lambda$ belongs to a suitable interval.  In this paper, we aim to continue this line of research by studying the bifurcation phenomena and the existence of multiple solutions for the nonlocal subelliptic Brezis-Nirenberg problem \eqref{pro1intro}. Moreover, we study the properties of the $(s,p)$-spectrum of $(-\Delta_{p,\mathbb{G}})^s$ characterizing the second eigenvalue $\lambda_2(\Omega)$. Our work extends the results from \cite{CFS84, FMS16} to the sub-Riemannian setting and from \cite{L07} to the nonlocal setting on stratified Lie groups. The main result of this paper is stated below.

\begin{theorem} \label{mainthm}
    Let $\G$ be a stratified Lie group of homogeneous dimension $Q$ and let  $ \Omega \subset \G$ be a bounded domain. Let $s \in (0, 1)$ with $Q>2s.$ Let $\lambda \in \mathbb{R}$ and $\lambda^*$ be the eigenvalue of problem 
    \begin{equation} \label{eignintro}
      \begin{cases} 
(-\Delta_{\mathbb{G}})^s u=  \lambda u \quad &\text{in}\quad \Omega, \\
u=0\quad & \text{in}\quad \mathbb{G}\backslash \Omega,
\end{cases}
  \end{equation} given by 
  \begin{equation} \label{eqq1.11}
      \lambda^*:= \min \{ \lambda_k: \lambda <\lambda_k \}
  \end{equation}
  with $m$ being its multiplicity. Assume that 
  \begin{equation} \label{eqq1.12}
      \lambda \in (\lambda^*- C_{2_s^*, Q} |\Omega|^{\frac{-2s}{Q}}, \lambda^*),
  \end{equation}
  where $C_{2_s^*, Q}$ is the best constant in the fractional critical Folland-Stein-Sobolev embedding.
    Then the problem \eqref{pro1intro} admits at least $m$ pair of nontrivial solutions $\{-u_{\lambda, i}, u_{\lambda, i}\}$ such that 
    $$\|u_{\lambda, i}\|_{X_0^{s,2}} \rightarrow 0\quad \text{as} \quad \lambda \rightarrow \lambda^*$$ for any $i=1,2,\ldots, m. $
\end{theorem}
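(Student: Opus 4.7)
The strategy is a $\mathbb{Z}_2$-equivariant Ljusternik--Schnirelmann min-max applied to the even energy functional
\begin{equation*}
J_\lambda(u):=\tfrac12\|u\|_{X_0^{s,2}(\Om)}^2-\tfrac{\lambda}{2}\|u\|_{L^2(\Om)}^2-\tfrac{1}{2_s^*}\|u\|_{L^{2_s^*}(\Om)}^{2_s^*},
\end{equation*}
whose critical points in $X_0^{s,2}(\Om)$ are the weak solutions of \eqref{pro1intro}. The skeleton follows the Cerami--Fortunato--Struwe scheme \cite{CFS84} and its nonlocal Euclidean adaptation \cite{FMS16}, transplanted to the stratified framework already developed in \cite{GKR22, GKR24}.

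As a preliminary step I would establish the \emph{local} Palais--Smale property: $J_\lambda$ satisfies $(PS)_c$ at every level
\begin{equation*}
c<c^*:=\tfrac{s}{Q}\,C_{2_s^*,Q}^{\,Q/(2s)},
\end{equation*}
via boundedness of PS sequences, a Brezis--Lieb splitting, and the quantification of the loss of compactness by $\G$-left translations and anisotropic dilations on $\G$ — this is the standard threshold fixed by the sharp fractional Folland--Stein--Sobolev inequality. The geometric input is then an elementary finite-dimensional bound: let $\{e_i\}$ be an $L^2$-orthonormal eigenbasis for \eqref{eignintro} with eigenvalues $\lambda_i\nearrow\infty$, let $k$ be such that $\lambda_k<\lambda^*=\lambda_{k+1}=\cdots=\lambda_{k+m}$, and put $W:=\mathrm{span}\{e_1,\dots,e_{k+m}\}$. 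For $u=\sum_{i=1}^{k+m}c_i e_i\in W$ one has $\|u\|^2_{X_0^{s,2}}=\sum_{i=1}^{k+m}c_i^2\lambda_i$ and $\|u\|_{L^2}^2=\sum_{i=1}^{k+m}c_i^2$, so $\|u\|_{X_0^{s,2}}^2-\lambda\|u\|_{L^2}^2\le(\lambda^*-\lambda)\|u\|_{L^2}^2$. Combined with the H\"older bound $\|u\|_{L^2}^2\le|\Om|^{2s/Q}\|u\|_{L^{2_s^*}}^2$ and optimisation in $t=\|u\|_{L^{2_s^*}}$, this yields
\begin{equation*}
\sup_{u\in W}J_\lambda(u)\;\le\;\tfrac{s}{Q}\bigl((\lambda^*-\lambda)|\Om|^{2s/Q}\bigr)^{Q/(2s)},
\end{equation*}
which, by hypothesis \eqref{eqq1.12}, is strictly below $c^*$. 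This is exactly the role of the spectral window in \eqref{eqq1.12}.

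For the counting, I would introduce the Krasnoselskii genus $\gamma$ and the min-max levels
\begin{equation*}
c_j:=\inf_{A\in\Sigma_j}\sup_{u\in A}J_\lambda(u),\qquad j=k+1,\dots,k+m,
\end{equation*}
where $\Sigma_j$ is the family of compact symmetric subsets of a suitable small sphere in $W$ with $\gamma(A)\ge j$. Positivity of $J_\lambda$ on a small sphere of $\overline{\mathrm{span}}\{e_i\}_{i>k}$ (available since $\lambda<\lambda^*\le\lambda_{k+1}$), together with the genus intersection inequality $\gamma(A\cap\overline{\mathrm{span}}\{e_i\}_{i>k})\ge j-k$, forces $0<c_{k+1}\le\cdots\le c_{k+m}\le\sup_W J_\lambda<c^*$. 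The deformation lemma — valid because of local PS — combined with standard Ljusternik--Schnirelmann theory makes each $c_{k+i}$ a critical value, and any coincidence $c_j=\cdots=c_{j+\ell}$ upgrades the critical set to genus $\ge\ell+1$, producing in all $m$ pairs $\pm u_{\lambda,i}$. The vanishing assertion then follows from $0<J_\lambda(u_{\lambda,i})\le\frac{s}{Q}((\lambda^*-\lambda)|\Om|^{2s/Q})^{Q/(2s)}\to 0$ as $\lambda\uparrow\lambda^*$, combined with the Nehari-type identity $J_\lambda(u)-\tfrac12\langle J_\lambda'(u),u\rangle=\tfrac{s}{Q}\|u\|_{L^{2_s^*}}^{2_s^*}$ at any critical point, which forces $\|u_{\lambda,i}\|_{L^{2_s^*}}\to 0$ and then, via the equation, $\|u_{\lambda,i}\|_{X_0^{s,2}}\to 0$.

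The principal obstacle is the local Palais--Smale step. In the Euclidean fractional setting \cite{FMS16} one uses the explicit Aubin--Talenti extremals; on a general stratified Lie group the extremals of the critical fractional Folland--Stein--Sobolev inequality are not known in closed form, so the concentration-compactness argument must be executed abstractly, using left-translations, anisotropic $\G$-dilations, the sharp constant from \cite{GKR23}, and the compact subcritical embeddings from \cite{GKR22}, together with the energy-quantisation statement that every nontrivial bubble carries at least $c^*$. Once this compactness threshold is in place, the finite-dimensional bound and the genus count are essentially routine.
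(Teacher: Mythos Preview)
Your overall scheme is the same Cerami--Fortunato--Struwe route the paper takes, and the finite-dimensional upper bound on $W$, the threshold $c^*=\tfrac{s}{Q}C^{Q/s}$, and the vanishing argument at the end are exactly as in the paper. The paper packages the counting step by quoting the abstract Bartolo--Benci--Fortunato theorem (with linking subspaces $\mathcal{W}=\mathrm{span}\{e_1,\dots,e_{k+m-1}\}$ and $\mathcal{V}=\{e_1,\dots,e_{k-1}\}^\perp$) rather than running the genus min-max by hand, but that is only a matter of presentation.

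Where you diverge is in the local Palais--Smale step, which you flag as the ``principal obstacle'' requiring concentration--compactness, $\G$-translations, anisotropic dilations, and energy quantisation of bubbles, with the unknown form of the Folland--Stein extremals as the main worry. This is substantial overkill. The paper's Lemma~4.1 carries out $(PS)_c$ for $c<c^*$ by a completely elementary argument that never identifies bubbles and never needs extremals: bound the PS sequence, extract a weak limit $u_\infty$ (which is a weak solution), apply the Brezis--Lieb lemma to both the Gagliardo seminorm and the $L^{2_s^*}$ norm, and use $\langle J_\lambda'(u_j)-J_\lambda'(u_\infty),u_j-u_\infty\rangle\to 0$ to get $\|u_j-u_\infty\|_{X_0^{s,2}}^2-\|u_j-u_\infty\|_{L^{2_s^*}}^{2_s^*}\to 0$. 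Calling the common limit $S$, the Sobolev constant alone forces $S=0$ or $S\ge C^{Q/s}$, and the level hypothesis $c<c^*$ together with $J_\lambda(u_\infty)\ge 0$ rules out the second alternative. No profile decomposition, no translation/dilation invariance, and no information about extremals is used; everything needed is already in the compact subcritical embedding from \cite{GKR22} and the definition of the best constant. So the step you single out as the hard one is in fact routine, and your proposed machinery, while it would work, is unnecessary here.
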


The proof of Theorem \ref{mainthm} is based on an abstract result established by Bartolo, Benci, and Fortunato \cite{BBF83}. It relies on the linking argument, which is based on the eigenspace of $(-\Delta_{\G})^s,$ as well as a detailed analysis of the variational Dirichlet fractional subelliptic eigenvalue problem investigated in this paper and \cite{GKR22} along with fractional Sobolev embeddings on stratified Lie groups.

\begin{remark} It is desirable to obtain Theorem \ref{mainthm} for the more general (nonlinear) fractional $p$-sub-Laplacian, $(-\Delta_{\mathbb{G}, p})^s$ defined by 
\begin{equation} \label{fracps}
        (-\Delta_{\mathbb{G}, p})^su(x):= C(Q, s, p)  P.V. \int_{{\mathbb{G}} } \frac{|u(x)-u(y)|^{p-2}(u(x)-u(y))}{\left|y^{-1} x\right|^{Q+p s}} d y, \quad x \in {\mathbb{G}},
    \end{equation}
    where  $0<s<1<p<\infty$ with $Q>ps$. This will provide the complete resolution of bifurcation and multiplicity results for the subelliptic Brezis-Nirenberg problems studied in \cite{GKR22, GKR24}. Such an extension is highly nontrivial and necessitates a novel abstract critical point theorem that does not rely on linear spaces. Specifically, the proof of Theorem \ref{mainthm} depends on a linking argument constructed using the (linear) eigenspaces of $(-\Delta_\G)^s$. However, this approach is not applicable when $p \neq 2$, as the nonlinear operator $(-\Delta_{\mathbb{G}, p})^s$ lacks linear eigenspaces. In this context, it is worth noting the work of Perera, Squassina, and Yang \cite{PSY16}, which addresses bifurcation and multiplicity results for the critical fractional $p$-Laplacian in $\mathbb{R}^n$. Their approach involves a more general construction based on sublevel sets and a sequence defined using the $\mathbb{Z}_2$-cohomological index. This methodology is essential because the standard sequence of variational eigenvalues of $(-\Delta_{\mathbb{G}, p})^s$ does not provide sufficient information about the structure of the sublevel sets needed for the linking construction. Extending the techniques developed in \cite{PSY16} to subelliptic operators on stratified Lie groups would be a valuable direction for future research.
\end{remark}

Now, we turn our attention to the nonlocal $(s, p)$-eigenvalue problem. The study of nonlocal $(s, p)$-eigenvalue problems was initially explored by Lindgren and Lindqvist \cite{LL14}, Franzina and Palatucci \cite{FP14}, and Brasco and Parini \cite{BP16}. Since then, this topic has attracted significant attention. For instance, we refer to \cite{L05, L17, AA87, BP16, LL14, FP14}, among others, which have contributed to the advancement of the nonlinear eigenvalue problem. To the best of our knowledge, the investigation of eigenvalue problems in the subelliptic setting has been relatively limited. The earliest works on this subject date back to \cite{MS78, FP81}. Subsequent developments include studies on the $p$-sub-Laplacian on the Heisenberg group, as presented in \cite{HL08}. More recently, there has been increasing interest in eigenvalue problems involving subelliptic operators on stratified Lie groups. For further details, we refer to \cite{CC21, HL08, FL10, CC19, GU21a} and the references therein. The nonlinear nonlocal Dirichlet eigenvalue problem on stratified Lie groups was introduced in our previous paper \cite{GKR22} with Ruzhansky. Recall the eigenvalue problem for the fractional $p$-sub-Laplacian $(-\Delta_{p,{\mathbb{G}}})^s$ from \cite{GKR22}:
\begin{align} \label{sp eigen1}
	(-\Delta_{p,{\mathbb{G}}})^s u&=\lambda |u|^{p-2}u,~\text{in}~\Omega,\nonumber\\
	u&=0~\text{ in }~{\mathbb{G}}\setminus\Omega,
\end{align}
where $\Omega$ is bounded domain in ${\mathbb{G}}$ of homogeneous dimension $Q$ and $0<s<1<p<\infty$ with $Q>ps$. 
For recent developments on the fractional $p$-sub-Laplacian $(-\Delta_{p,{\mathbb{G}}})^s$, we refer to \cite{GKR22, GKR24, KD20, GKR23,FZ24, PP24, Pic22} and references therein. Notably, for $p = 2$, this operator reduces to the fractional sub-Laplacian $(-\Delta_\G)^s$.

In this paper, we extend the study of the $(s, p)$-spectrum of $(-\Delta_{\mathbb{G}, p})^s$, building on properties $(i)-(v)$ of Theorem \ref{ev-mainthmintro}  below established in \cite{GKR22}. We consolidate previous findings with new results to provide a comprehensive reference for the properties of $(s, p)$-eigenvalues and $(s, p)$-eigenfunctions. We believe these results will have applications beyond this work, guiding subelliptic variational problems on stratified Lie groups.
\begin{theorem}\label{ev-mainthmintro}
 	Let $0<s<1<p<\infty$ and let $\Omega$ be a bounded domain of a stratified Lie group $\mathbb{G}$ of homogeneous dimension $Q$. Then for $Q>ps$, the eigenvalues and eigenfunctions of problem \eqref{sp eigen1} associated  $(-\Delta_{p,{\mathbb{G}}})^s$ have the following properties:
	\begin{enumerate}[label=(\roman*)]
	\item 	The first eigenvalue $\lambda_1$ of \eqref{sp eigen1} is given by 
	\begin{equation}
		\lambda_1:=\lambda_1(\Omega)=\inf_{\overset{u \in X_0^{s,p}(\Omega)}{\|u\|_p=1} } \int_{\mathbb{G} \times \mathbb{G}}  \frac{|u(x)-u(y)|^p}{|y^{-1}x|^{Q+ps}}dxdy
	\end{equation} or equivalently, 
	\begin{equation} \label{eigva11}
		\lambda_1:=\inf_{u \in X_0^{s,p}(\Omega) \setminus \{0\}} \frac{ \int_{\mathbb{G} \times \mathbb{G}}  \frac{|u(x)-u(y)|^p}{|y^{-1}x|^{Q+ps}}dxdy}{\int_{\Omega} |u(x)|^p\,dx}
	\end{equation}

 \item  There exists a non-negative function $e_1 \in X_0^{s,p}(\Omega),$ the eigenfunction corresponding to $\lambda_1,$ attaining the minimum in \eqref{eigva11}, that is, $\|e_1\|_{L^p(\Omega)}=1$ and 
 $$\lambda_1=\int_{\mathbb{G} \times \mathbb{G}}  \frac{|e_1(x)-e_1(y)|^p}{|y^{-1}x|^{Q+ps}}dxdy.$$ In particular, the first eigenvalue $\lambda_1$ is principle.

		\item The first eigenvalue $\lambda_1$ of the problem \eqref{sp eigen1}  is simple and the corresponding eigenfunction $\phi_1$ is the only eigenfunction of constant sign, that is, if $u$ is an eigenfunction associated to an eigenvalue $\nu>\lambda_1(\Omega)$, then $u$ must be sign-changing.
  \item The first eigenvalue $\lambda_1$ of the problem \eqref{sp eigen1}  is isolated. 
  
  \item The eigenfunctions for positive eigenvalues to \eqref{sp eigen1} are bounded in $\mathbb{G}$.
  \item Assume that $(\Omega_j)$ is an increasing sequence of domains such that $\Omega=\cup_{j=1}^{\infty}\Omega_j$. Then $\lambda_1(\Omega_j)$ decreases to $\lambda_1(\Omega)$.
   \item The set of all $(s,p)$-eigenvalues, that is the spectrum $\sigma(s,p)$ to \eqref{sp eigen1} is closed.
   
		\item The set of eigenvalues of the problem \eqref{sp eigen1} consists of a sequence $(\lambda_n)$ with
  \begin{align}
      0<\lambda_1<\lambda_2 \leq \ldots \leq \lambda_n \leq \lambda_{n+1} \leq \ldots~\text{and }&
      \lambda_n \rightarrow \infty\,\,\,\text{as}\,\,n \rightarrow \infty.
  \end{align}
 \item Moreover, when $p=2$, for each $n \in \N$, the sequence of eigenvalues can be characterized as follows:
  \begin{equation}
      \lambda_{n+1}:= \min_{\overset{u \in \mathbb{P}_{n+1}}{\|u\|_{L^2(\Omega)}=1} } \int_{\mathbb{G} \times \mathbb{G}}  \frac{|u(x)-u(y)|^2}{|y^{-1}x|^{Q+2s}}dxdy,
  \end{equation}
or, equivalently
 \begin{equation}
	\lambda_{n+1}:= \min_{{u \in \mathbb{P}_{n+1}}\setminus\{0\}} \frac{ \int_{\mathbb{G} \times \mathbb{G}}  \frac{|u(x)-u(y)|^2}{|y^{-1}x|^{Q+2s}}dxdy}{\int_{\Omega} |u(x)|^2\,dx},
\end{equation}
  where 
  \begin{equation}
      \mathbb{P}_{n+1}:= \left\{ u \in X_0^{s,2}(\Omega): \int_{\mathbb{G} \times \mathbb{G}}  \frac{(u(x)-u(y))(e_j(x)-e_j(y))}{|y^{-1}x|^{Q+2s}}dxdy=0,\right\}
  \end{equation}
for all $j=1,2, \ldots, n$ with $0<\lambda_1<\lambda_2 \leq \ldots \leq \lambda_n \leq \lambda_{n+1} \leq \ldots\infty$. 
		\item For $p=2$ and for any $n \in \N$ there exists a function $e_{n+1} \in \mathbb{P}_{n+1}$,  the eigenfunction corresponding to $\lambda_{n+1},$ attaining the minimum in \eqref{eigva11}, that is, $\|e_{n+1}\|_{L^2(\Omega)}=1$ and 
 \begin{equation}
 	\lambda_{n+1}=\int_{\mathbb{G} \times \mathbb{G}}  \frac{|e_{n+1}(x)-e_{n+1}(y)|^2}{|y^{-1}x|^{Q+2s}}dxdy.
 \end{equation}
 \item For $p=2$, the sequence $(e_n)_{n \in \N}\subset X_0^{s,2}(\Omega)$ of eigenfunctions corresponding to $\lambda_n$ is an orthonormal basis of $L^2(\Omega)$ and an orthogonal basis of $X_0^{s,2}(\Omega).$
 \item Moreover, for $p=2$, each eigenvalue $\lambda_n$ has finite multiplicity. More precisely, if $\lambda_n$ satisfies 
 $$\lambda_{n-1} < \lambda_n = \cdot =\lambda_{n+m}<\lambda_{n+m+1}$$ for some $m \in \mathbb{N}\cup\{0\},$ then the set of all eigenfunctions corresponding to $\lambda_n$ agrees with 
 $$\text{span} \{e_n, \cdots, e_{n+m}\}.$$
	\end{enumerate}	
\end{theorem}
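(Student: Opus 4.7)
The plan is to split the twelve properties into three groups according to the nature of the arguments required. Properties (i)--(v) are already established in \cite{GKR22}, so I focus on (vi)--(xii). The central analytic tool throughout is the compact embedding $X_0^{s,p}(\Omega)\hookrightarrow L^p(\Omega)$ proved in \cite{GKR22}, together with the $C^1$ structure of the Rayleigh quotient over $X_0^{s,p}(\Omega)$.

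For (vi), I would use the natural inclusion $X_0^{s,p}(\Omega_j)\subset X_0^{s,p}(\Omega)$ given by extension by zero. The variational formula \eqref{eigva11} immediately yields the monotonicity $\lambda_1(\Omega)\leq\lambda_1(\Omega_{j+1})\leq\lambda_1(\Omega_j)$, so the sequence decreases to some $\ell\geq\lambda_1(\Omega)$. To show equality, pick any $\varphi\in C_c^\infty(\Omega)$: since $\mathrm{supp}(\varphi)$ is compact and $\Omega=\bigcup_j\Omega_j$ is an increasing union, $\varphi\in X_0^{s,p}(\Omega_j)$ for $j$ large, so $\lambda_1(\Omega_j)$ is bounded above by the Rayleigh quotient of $\varphi$. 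Density of $C_c^\infty(\Omega)$ in $X_0^{s,p}(\Omega)$ concludes.

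The heart of the argument is the nonlinear part (vii)--(viii). For closedness of the spectrum in (vii), given eigenvalues $\lambda_n\to\lambda$ with eigenfunctions $u_n$ normalized by $\|u_n\|_{L^p(\Omega)}=1$, testing \eqref{sp eigen1} against $u_n$ gives $\int_{\mathbb{G}\times\mathbb{G}}|u_n(x)-u_n(y)|^p|y^{-1}x|^{-Q-ps}\,dx\,dy=\lambda_n$; hence $(u_n)$ is bounded in $X_0^{s,p}(\Omega)$ and I extract a weak limit $u$. The compact embedding yields $u_n\to u$ strongly in $L^p(\Omega)$ and pointwise a.e., so $\|u\|_{L^p(\Omega)}=1$ and $u\not\equiv 0$. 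The main obstacle is passing to the limit in the nonlinear Gagliardo form: the plan is to prove strong convergence $u_n\to u$ in $X_0^{s,p}(\Omega)$ by combining the Brezis-Lieb type splitting for the fractional seminorm with the Simon-type monotonicity inequality for $t\mapsto|t|^{p-2}t$, thereby identifying $u$ as an eigenfunction with eigenvalue $\lambda$. For (viii), I would apply Lusternik-Schnirelman theory on the $C^1$-manifold $\mathcal{M}=\{u\in X_0^{s,p}(\Omega):\|u\|_{L^p(\Omega)}=1\}$ to the even $C^1$-functional $u\mapsto\int_{\mathbb{G}\times\mathbb{G}}|u(x)-u(y)|^p|y^{-1}x|^{-Q-ps}\,dx\,dy$, defining
\begin{equation*}
\lambda_n:=\inf_{A\in\mathcal{F}_n}\sup_{u\in A}\int_{\mathbb{G}\times\mathbb{G}}\frac{|u(x)-u(y)|^p}{|y^{-1}x|^{Q+ps}}\,dx\,dy,
\end{equation*}
where $\mathcal{F}_n$ denotes the family of symmetric compact subsets of $\mathcal{M}$ with Krasnoselski genus at least $n$. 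The Palais-Smale condition follows from the compactness argument of (vii), and the standard deformation lemma then produces an unbounded sequence of critical values, which by Lagrange multipliers coincide with the $(s,p)$-eigenvalues of \eqref{sp eigen1}.

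Finally, for (ix)--(xii) the assumption $p=2$ makes $(-\Delta_\G)^s$ linear and self-adjoint on the Hilbert space $X_0^{s,2}(\Omega)$ equipped with the inner product
\begin{equation*}
\langle u,v\rangle_{s,2}:=\int_{\mathbb{G}\times\mathbb{G}}\frac{(u(x)-u(y))(v(x)-v(y))}{|y^{-1}x|^{Q+2s}}\,dx\,dy.
\end{equation*}
By the Riesz representation theorem the inverse $T:=(-\Delta_\G)^{-s}\colon L^2(\Omega)\to X_0^{s,2}(\Omega)$ is well defined, and the compact embedding $X_0^{s,2}(\Omega)\hookrightarrow L^2(\Omega)$ renders $T$ a compact self-adjoint operator on $L^2(\Omega)$. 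The Hilbert-Schmidt spectral theorem produces an orthonormal basis $(e_n)$ of $L^2(\Omega)$ consisting of eigenfunctions with eigenvalues $\mu_n\to 0^+$, giving $\lambda_n=1/\mu_n$ with finite multiplicity, which settles (xi) and (xii). The Courant-Fischer min-max formula translates into the variational characterization on $\mathbb{P}_{n+1}$, establishing (ix) and (x). The principal technical hurdle throughout is the strong-convergence argument required in (vii) and the Palais-Smale verification in (viii), where the nonlocal, nonlinear, and non-Euclidean features must be treated simultaneously via the stratified-group version of the fractional Brezis-Lieb lemma.
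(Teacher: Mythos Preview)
Your outline for (vi)--(vii) matches the paper's argument closely: monotonicity plus density of $C_c^\infty(\Omega)$ for (vi), and boundedness plus compact embedding plus the Simon-type monotonicity inequality for $t\mapsto|t|^{p-2}t$ to upgrade weak to strong convergence in (vii). The Brezis--Lieb splitting you mention is not actually needed there; the paper proceeds directly by showing $\langle\mathcal A(u_n)-\mathcal A(u),u_n-u\rangle\to 0$ and then invoking the elementary inequality from \cite{BP16}. For (viii), you and the paper both run Lusternik--Schnirelmann theory over the constraint manifold with Krasnoselskii genus, but you omit the one nontrivial point: the strict inequality $\lambda_1<\lambda_2$ does not fall out of the genus construction alone. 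The paper handles this by introducing a separate mountain-pass characterization of $\lambda_2$ (over odd continuous loops in the constraint manifold, following Brasco--Parini) and deriving the strict gap from the simplicity of $\lambda_1$; you could alternatively argue via the Lusternik--Schnirelmann multiplicity theorem (equal consecutive critical values force genus of the critical set $\geq 2$) combined with simplicity, but this step must be made explicit.

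For (ix)--(xii) you take a genuinely different route. The paper proceeds constructively: it shows by induction that the minimizer on $\mathbb P_{n+1}$ is an eigenfunction, proves directly that every eigenvalue appears in the sequence, verifies orthogonality and the basis property by hand, and obtains finite multiplicity through a contradiction argument using the variational characterization. Your proposal is more structural: build the compact, self-adjoint, positive inverse $T=(-\Delta_\G)^{-s}$ on $L^2(\Omega)$ via Riesz and the compact embedding, then read off the full spectral decomposition from the Hilbert--Schmidt theorem, with Courant--Fischer giving the $\mathbb P_{n+1}$ characterization. Both are correct; your approach is shorter and conceptually cleaner for the linear case, while the paper's hands-on argument stays closer to the variational framework used for general $p$ and makes the connection between the $\mathbb P_{n+1}$-minimization and the weak eigenvalue equation completely explicit.
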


\begin{remark} The eigenvalues $\lambda_1(\Omega)$ and $\lambda_2(\Omega)$ can be estimated from below using the Faber-Krahn and Hong-Krahn-Szegö inequalities, respectively in the Euclidean case (see \cite{LL14, BP16}). However, in the context of stratified Lie groups, particularly the Heisenberg group $\mathbb{H}^n$, these inequalities are not directly applicable. They rely on symmetric decreasing rearrangements, a method central to the Euclidean setting that involves rearranging functions into radially symmetric, decreasing forms while preserving their $L^p$ norms and minimizing Dirichlet integrals. The non-commutative structure, anisotropy, and sub-Riemannian geometry of stratified Lie groups like $\mathbb{H}^n$ prevent the use of this technique, rendering classical approaches to these spectral inequalities ineffective (see \cite{FL}). Therefore, a completely new approach, independent of symmetric rearrangement arguments, is necessary and will be explored in future work.
     
\end{remark}

The structure of the paper is as follows. Section \ref{s2} introduces preliminary results, including fundamental notations and concepts related to stratified Lie groups, as well as the definition and properties of fractional Sobolev spaces within this framework. In Section \ref{s3}, we explore the subelliptic nonlocal $(s,p)$-eigenvalue problem and provide a detailed proof of Theorem \ref{ev-mainthmintro}. Finally, in Section \ref{s4}, we present the proof of Theorem \ref{mainthm}, focusing on bifurcation phenomena and the existence of multiple solutions for the nonlocal subelliptic Brezis-Nirenberg problem.

\section{Preliminaries: Stratified Lie groups and fractional Sobolev spaces}\label{s2}

In this section, we will review some essential tools related to stratified Lie groups and the fractional Sobolev spaces defined on them. There are multiple approaches to introducing the concept of stratified Lie groups, and interested readers can refer to various books and monographs such as \cite{FS82, BLU07, GL92, FR16, RS19,F75}.  

A Lie group $\mathbb{G}$ (on $\mathbb{R}^N$ with a Lie group law $\circ$) is said to be {\it homogeneous} if, for each $\lambda>0$, there exists an automorphism $D_{\lambda}:\mathbb{G} \rightarrow\mathbb{G}, $ called a  {\it dilation}, defined by $D_{\lambda}(x)=  (\lambda x^{(1)}, \lambda^2 x^{(2)},..., \lambda^{k}x^{(k)})$ for $x^{(i)} \in \mathbb{R}^{N_i},\,\forall\, i=1,2,..., k$ and $N_1 + N_2+ ... + N_k = N.$   We denote the Lie algebra associated with the Lie group $\G =(\mathbb{R}^N, \circ),$ that is, the Lie algebra of left-invariant (with respect to the group law $\circ$) vector fields on $\mathbb{R}^N$ by $\mathfrak{g}$. With $N_1$ the same as in the above decomposition of $\mathbb{R}^N$, let $X_1, ..., X_{N_1} \in \mathfrak{g} $  such that $X_i(0) = \frac{\partial}{\partial x_i}|_0$ for $i = 1, ..., N_1$. We make the following assumption: 

{\it The H\"ormander condition $rank(Lie\{X_1, ..., X_{N_1} \}) = N$
		holds for every $x \in \mathbb{R}^{N}$, that is,   the Lie algebra generated  by  $X_1, ..., X_{N_1}$ is whole $\mathfrak{g}$.}

With the above hypothesis, we call $\G =(\mathbb{R}^n, \circ, D_\lambda)$ a stratified Lie group (or a homogeneous Carnot group). Here $k$ is called the step of the stratified Lie group and $n:=N_1$ is the number of generators. The number $Q:=\sum_{i=1}^{i=k}iN_i$ is called the Homogeneous dimension of $\G.$ At times. we write $\lambda x$ to denote the dilation $D_{\lambda}x$.  The Haar measure on $\mathbb{G}$ is denoted by $d x$ and it is nothing but the usual Lebesgue measure on $\mathbb{R}^N.$   Let $\Omega$ be a Haar measurable subset of $\mathbb{G}$. Then $\mu(D_{\lambda}(\Omega))=\lambda^{Q}\mu(\Omega)$ where $\mu(\Omega)$ is the Haar measure of $\Omega$. 

 We would like to note here that in the literature, a {\it stratified} Lie group (or Carnot group) $\mathbb{G}$ is defined as a connected and simply connected Lie group whose Lie algebra $\mathfrak{g}$ is {\it stratifiable}. This means $\mathfrak{g}$ admits a vector space decomposition $\mathfrak{g} = \bigoplus_{i=1}^k \mathfrak{g}_i$ such that  $[\mathfrak{g}_1, \mathfrak{g}_i] = \mathfrak{g}_{i+1}$ for all $i=1.2,\ldots, k-1$ and  $[\mathfrak{g}_1, \mathfrak{g}_k] = \{0\}.$ 
 It is not difficult to see that any homogeneous Carnot group is a Carnot group according to the classical definition (see \cite[Theorem 2.2.17]{BLU07}). The opposite implication is also true; we refer to \cite[Theorem 2.2.18]{BLU07} for a detailed proof.  

The advantage of our operative definition of a stratified Lie group is that it is not only more convenient for analytic purposes but also enables us to gain a clearer understanding of the underlying group law.   Examples of stratified Lie groups include the Heisenberg group, more generally, $H$-type groups, the Engel group, and the Cartan group.  

For any $x,y\in\mathbb{G}$ the Carnot-Carath\'{e}odory distance is defined as
$$\rho_{cc}(x,y)=\inf\{l>0: ~\text{there exists an admissible}~ \gamma:[0,l]\rightarrow\mathbb{G} ~\text{with}~ \gamma(0)=x ~\text{\&}~ \gamma(l)=y \}.$$
We define $\rho_{cc}(x,y)=0$, if no such curve exists. We recall that an absolutely continuous curve $\gamma:[0,1]\rightarrow\mathbb{R}$ is said to be admissible, if there exist functions $c_i:[0,1]:\rightarrow\mathbb{R}$, for $i=1,2...,n$ such that
$${\dot{\gamma}(t)}=\sum_{i=1}^{n}c_i(t)X_i(\gamma(t))~\text{and}~ \sum_{i=1}^{i=n}c_i(t)^2\leq1.$$
Note that the functions $c_i$ may not be unique since the vector fields $X_i$ might not be linearly independent. Generally,  $\rho_{cc}$ is not a metric, but the H\"ormander condition for the vector fields $X_1,X_2,...X_{n}$ guarantees that $\rho_{cc}$ is indeed a metric. Consequently, the space $(\mathbb{G}, \rho_{cc})$ is classified as a Carnot-Carathéodory space.

	A continuous function $|\cdot|: \mathbb{G} \rightarrow \mathbb{R}^{+}$ is said to be a homogeneous quasi-norm on a homogeneous Lie group $\mathbb{G}$ if it is symmetric ($|x^{-1}| = |x|$ for all $x \in\mathbb{G}$), definite ($|x| = 0$ if and only if $x = 0$), and $1$-homogeneous ( $|D_\lambda x| = \lambda |x|$ for all $x \in\mathbb{G}$ and $\lambda>0$).

An example of a quasi-norm on \(\mathbb{G}\) is the norm defined as \(d(x):=\rho_{cc}(x, 0),\,\, x\in \mathbb{G}\), where \(\rho\) is a Carnot-Carathéodory distance associated with H\"ormander vector fields on \(\mathbb{G}\). It is known that all homogeneous quasi-norms on \(\mathbb{G}\) are equivalent. In this paper, we will use a left-invariant homogeneous distance \(d(x, y):=|y^{-1} \circ x|\) for all \(x, y \in \mathbb{G}\), which is derived from the homogeneous quasi-norm on \(\mathbb{G}\). The quasi-ball of radius $r$ centered at $x\in\mathbb{G}$ with respect to the quasi-norm $|\cdot|$ is defined as
\begin{equation}\label{d-ball}
	B(x, r)=\left\{y \in \mathbb{G}: \left|y^{-1} \circ x\right|<r\right\}.
\end{equation}

As a consequence of H\"ormander hypoelliptic condition,
the sub-Laplacian (or Horizontal Laplacian)  on $\mathbb{G}$ is defined as
\begin{equation}\label{d-sub-lap}
	\Delta_{\mathbb{G}}:=X_{1}^{2}+\cdots+X_{n}^{2}
\end{equation}
is a hypoelliptic operator. 
The horizontal gradient on $\mathbb{G}$ is defined as
\begin{equation}\label{d-h-grad}
	\nabla_{\mathbb{G}}:=\left(X_{1}, X_{2}, \cdots, X_{n}\right).
\end{equation}

For $s \in (0, 1)$ and $p\in(1,\infty)$,  we define the fractional $p$-sub-Laplacian, $(-\Delta_{\mathbb{G}})^s$ as 
\begin{equation}
        (-\Delta_{p,\mathbb{G}})^s(u)(x):= C(Q,p, s)  P.V. \int_{{\mathbb{G}} } \frac{|u(x)-u(y)|^{p-2}(u(x)-u(y))}{\left|y^{-1} x\right|^{Q+p s}} d y, \quad x \in {\mathbb{G}},
    \end{equation}
     where $|\cdot|$ is a homogeneous norm on the stratified Lie group $\G,$ $C(Q,s)$ is the positive normalization constant depending only on the homogeneous dimension $Q$ and $s.$ For $p=2$, we define the fractional sub-Laplacian as
     \begin{equation}
        (-\Delta_{\mathbb{G}})^s(u)(x):= C(Q, s)  P.V. \int_{{\mathbb{G}} } \frac{(u(x)-u(y))}{\left|y^{-1} x\right|^{Q+2 s}} d y, \quad x \in {\mathbb{G}},
    \end{equation}

The simplest example of a stratified Lie group is the Heisenberg group $\mathbb{H}^N$ with the underlying manifold $\mathbb{R}^{2N+1}:=\mathbb{R}^N\times\mathbb{R}^N\times\mathbb{R}$ for $N\in \mathbb{N}$. For $(x, y, t),(x', y', t')\in \mathbb{H}^N$ the multiplication in $\mathbb{H}^N$ is given by

\begin{equation*}
	(x, y, t) \circ(x', y', t')=(x+x', y+y', t+t'+2 (\langle x',y\rangle)-\langle x,y'\rangle),
\end{equation*}
where $(x, y, t), (x', y', t') \in \mathbb{R}^{N}\times \mathbb{R}^{N}\times \mathbb{R}$ and $\langle\cdot,\cdot\rangle$ represents the inner product on $\mathbb{R}^N$. The homogeneous structure of  the Heisenberg group $\mathbb{H}^N$ is provided by the following dilation, for $\lambda>0,$
\begin{equation*}
	D_{\lambda}(x,y,t)=(\lambda x, \lambda y, \lambda^2 t).
\end{equation*}
the  homogeneous dimension $Q$ of $\mathbb{H}^N$ is given by $2N+2:= N+N+2$ while the topological dimension of $\mathbb{H}^N$ is $2N+1.$
The left-invariant vector fields $\{X_i,Y_i\}_{i=1}^N$ defined below form a basis for the Lie algebra corresponding to the Heisenberg group $\mathbb{H}^N$:
\begin{align}
	X_i&=\frac{\partial}{\partial x_i}+2y_i\frac{\partial}{\partial t};
	Y_i=\frac{\partial}{\partial y_i}-2x_i\frac{\partial}{\partial t}~\text{and}~
	T=\frac{\partial}{\partial t}, ~\text{for}~i=1, 2,..., N.
\end{align}
It is easy to see that $[X_i,Y_i]=-4T$ for $i=1,2,...,N$ and $$[X_i,X_j]=[Y_i,Y_j]=[X_i,Y_j]=[X_i,T]=[Y_j,T]=0$$ for all $i\neq j$ and these vector fields satisfy the H\"ormander rank condition. Consequently, the sub-Laplacian on $\mathbb{H}^N$ is given by 
$$\Delta_{\mathbb{H}^N}:=\sum_{i=1}^N (X_i^2+Y_i^2).$$

We now define the notion of fractional Sobolev-Folland-Stein spaces, which will be useful for our study. For complete details and comparisons with other definitions of fractional Sobolev spaces on a stratified Lie group $\G$, we refer to \cite{GKR22}. For an open subset $\Omega \subset \G,$ we define the Gagliardo semi-norm, denoted by $[u]_{s, p, \G},$  as
\begin{equation}
	[u]_{s, p,\G}:=\left(\int_{\Omega} \int_{\Omega} \frac{|u(x)-u(y)|^{p}}{\left|y^{-1} x\right|^{Q+ps}} d xd y\right)^{\frac{1}{p}}<\infty,
\end{equation}
  for $0<s<1<p<\infty.$ The fractional Sobolev space $W^{s,p}(\G)$ on stratified groups is defined as 
\begin{equation}
	W^{s,p}(\G):=\{u\in L^{p}(\G): [u]_{s, 2,\G}<\infty\},
\end{equation}
endowed with the norm 
\begin{equation}
	\|u\|_{W^{s,p}(\G)}=\|u\|_{L^p(\G)}+[u]_{s,p,\G}.
\end{equation}
In a similar way, 
given an open subset  $\Omega \subset {\mathbb{G}},$ we  define the fractional Sobolev-Folland-Stein type $X_0^{s,p}(\Omega)$  as the closure of $C_c^{\infty}(\Omega)$ with respect to the  norm $$\|u\|_{X_0^{s,p}(\Omega)}=\|u\|_{L^p(\Omega)}+[u]_{s, p,\mathbb{G}}.$$
 It is also known that the space $X_0^{s,p}(\Omega)$ can also be defined as a closure of $C_c^{\infty}(\Omega)$ with respect to the homogeneous norm $[u]_{s, p, \mathbb{G}}.$ Moreover,  $X_0^{s,p}(\Omega)$ can be represented as 
$$X_0^{s,p}(\Omega):= \{u \in W^{s,p}(\G): u =0 \,\,\text{on}\,\,\, \G \backslash \Omega  \}.$$ For any $\varphi\in X_0^{s,p}(\Omega)$, we have
\begin{equation}
	\langle\left(-\Delta_{{p, \mathbb{G}}}\right)^s u,\varphi\rangle= \int_{\mathbb{G} \times \mathbb{G}} \frac{|u(x)-u(y)|^{p-2}(u(x)-u(y))(\varphi(x)-\varphi(y))}{\left|y^{-1} x\right|^{Q+p s}} d xd y.
\end{equation}
For $p=2$, the space $X_0^{s,2}(\Omega)$ is a Hilbert space with the inner product given by
\begin{equation}\label{eq3.6}
	\langle u,\varphi\rangle_{X_0^{s,2}(\Omega)}= \int_{\mathbb{G} \times \mathbb{G}} \frac{(u(x)-u(y))(\varphi(x)-\varphi(y))}{\left|y^{-1} x\right|^{Q+2s}} dxd y.
\end{equation}

Now, we recall a result about the inclusion of fractional Sobolev spaces  $X_0^{s,p}(\Omega)$ into the Lebesgue spaces proved in [Theorem 1.1, \cite{GKR22}].  For a study of the best constants of such inequalities and the Fractional Morrey-Sobolev type inequalities on stratified Lie groups, the readers can consult \cite{GKR23, GGKR25} and references therein.

\begin{theorem} \label{l-3} Let $\mathbb{G}$ be a stratified Lie group of homogeneous dimension $Q$, and let $\Omega\subset\mathbb{G}$ be an open set.
Let $0<s<1\leq p<\infty$ with $\frac{Q}{s}>p.$  Then the fractional Sobolev space $X_0^{s,p}(\Omega)$ is continuously embedded in $L^r(\Omega)$ for $p\leq r\leq p_s^*:=\frac{pQ}{Q-ps}$, that is, there exists a positive constant $C=C(Q,p,s, \Omega)$ such that for all $u\in X_0^{s,p}(\Omega)$, we have
\begin{equation} \label{emG}
   \|u\|_{L^r(\Omega)}\leq C \|u\|_{X_0^{s,p}(\Omega)}.
\end{equation}

Moreover, if $\Omega$ is bounded, then the following embedding
\begin{align} \label{compactemG}
    X_0^{s,p}(\Omega) \hookrightarrow L^r(\Omega)
\end{align}
is continuous for all $r\in[1,p_s^*]$ and is compact for all $r\in[1,p_s^*)$.
\end{theorem}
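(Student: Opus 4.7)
The plan is to first establish the critical continuous embedding $X_0^{s,p}(\Omega) \hookrightarrow L^{p_s^*}(\Omega)$, then deduce the subcritical continuous embeddings by H\"older interpolation, and finally obtain compactness via a Kolmogorov-Riesz-Fr\'echet criterion adapted to the Lie group setting. By density of $C_c^\infty(\Omega)$ in $X_0^{s,p}(\Omega)$, and extension by zero across $\partial\Omega$, it suffices to prove the inequality $\|u\|_{L^{p_s^*}(\G)} \leq C\bigl([u]_{s,p,\G} + \|u\|_{L^p(\G)}\bigr)$ for $u \in C_c^\infty(\Omega)$ viewed as an element of $W^{s,p}(\G)$.

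For the critical embedding itself, my approach is to transplant the classical Euclidean fractional Sobolev argument (in the style of Di Nezza--Palatucci--Valdinoci) to the sub-Riemannian setting, exploiting the fact that $\G$ equipped with the homogeneous quasi-distance and Haar measure is a space of homogeneous type. I would construct a Christ-type dyadic system $\{Q_\alpha^j\}$ of quasi-cubes of radius $\sim 2^{-j}$ and, on each such cube $Q$ of radius $r$, establish a fractional Poincar\'e inequality $\|u - u_Q\|_{L^p(Q)} \leq C r^s [u]_{s,p,Q^*}$, where $Q^*$ denotes a bounded dilate of $Q$. A telescoping estimate $|u(x) - u_Q| \leq \sum_j |u_{Q_{j+1}} - u_{Q_j}|$ along a chain of nested dyadic cubes shrinking to $x$, combined with a sharp choice of exponents, then produces the critical bound. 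In the Hilbert case $p = 2$ a more direct route is available: $[u]_{s,2,\G}^2$ is equivalent to $\|(-\Delta_\G)^{s/2}u\|_{L^2(\G)}^2$ by Plancherel on $\G$, so one writes $u$ as the convolution of $(-\Delta_\G)^{s/2}u$ with a Riesz-type kernel of order $s$ (in the sense of Folland) and invokes the Folland--Stein Hardy--Littlewood--Sobolev inequality on $\G$.

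Once the critical inequality is in place, the subcritical continuous embeddings for $r \in [p, p_s^*]$ follow from H\"older interpolation $\|u\|_{L^r} \leq \|u\|_{L^p}^\theta \|u\|_{L^{p_s^*}}^{1-\theta}$; when $\Omega$ is bounded, the range $r \in [1, p)$ is absorbed via $\|u\|_{L^r(\Omega)} \leq |\Omega|^{1/r - 1/p}\|u\|_{L^p(\Omega)}$. For compactness, a bounded sequence $\{u_n\}\subset X_0^{s,p}(\Omega)$ is automatically tight since all $u_n$ vanish outside the bounded set $\Omega$. Equicontinuity of translations in $L^p$, namely $\int_\G |u(h \circ x) - u(x)|^p\,dx \leq C|h|^{sp}[u]_{s,p,\G}^p$, follows from splitting the Gagliardo integral at the scale $|h|$ and controlling each piece by the seminorm (large scales by the $L^p$ norm of $u$, small scales by the $s$-fractional modulus). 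The Kolmogorov--Riesz--Fr\'echet criterion on $\G$ then yields compactness in $L^p(\Omega)$, and interpolation with the critical embedding from Step 1 upgrades this to compactness in $L^r(\Omega)$ for every $r \in [1, p_s^*)$.

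The principal technical obstacle is the critical embedding for general $p \neq 2$, where no Plancherel-type identity is available to reduce the problem to the Folland--Stein Riesz potential. One must construct Christ-type dyadic cubes on $\G$, verify the fractional Poincar\'e inequality on them using only the doubling property of the Haar measure and the homogeneity of the quasi-norm (without symmetry), and carefully track exponents in the telescoping sum so as to recover the sharp critical value $p_s^* = pQ/(Q-ps)$ rather than a weaker exponent. A further subtlety is that a general homogeneous norm need not satisfy $|y^{-1}x| = |x^{-1}y|$, so the double Gagliardo integral lacks the symmetry exploited in $\mathbb{R}^n$, and one must proceed using left-invariance and the quasi-triangle inequality alone.
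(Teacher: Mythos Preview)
This theorem is not proved in the present paper; it is merely recalled from the authors' earlier work \cite{GKR22} (the sentence immediately preceding the statement reads: ``we recall a result about the inclusion of fractional Sobolev spaces $X_0^{s,p}(\Omega)$ into the Lebesgue spaces proved in [Theorem 1.1, \cite{GKR22}]''). There is therefore no in-paper argument against which to compare your proposal.

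For what it is worth, your outline is a reasonable strategy and is in the spirit of the Euclidean prototype (Di Nezza--Palatucci--Valdinoci) and of the proof in \cite{GKR22}: establish the critical embedding first, interpolate for the subcritical range, and use a Fr\'echet--Kolmogorov compactness criterion. Two remarks on the details. First, the translation-equicontinuity bound you write, $\int_\G |u(h\circ x)-u(x)|^p\,dx \le C|h|^{sp}[u]_{s,p,\G}^p$, does not drop out of the Gagliardo seminorm by a simple splitting at scale $|h|$; one usually passes through a mollification or maximal-function argument, and in fact for compactness it suffices to show that the left-hand side tends to zero uniformly as $|h|\to 0$, which is easier. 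Second, the asymmetry $|y^{-1}x|\neq |x^{-1}y|$ that you flag is not a genuine obstruction: any two homogeneous quasi-norms on $\G$ are equivalent, and in particular $|y^{-1}x|$ and $|x^{-1}y|$ are comparable with constants depending only on $\G$, so the Gagliardo integral is symmetric up to a fixed multiplicative constant.
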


\section{Eigenvalues of the fractional $p$-sub-Laplacian on stratified Lie groups} \label{s3}
In this section, we discuss the fundamental spectral properties for the nonlocal Dirichlet eigenvalue problem associated with the fractional $p$-sub-Laplacian (see \cite{GKR22}).
\begin{align} \label{sp eigen}
	(-\Delta_{p,{\mathbb{G}}})^s u&=\lambda |u|^{p-2}u,~\text{in}~\Omega,\nonumber\\
	u&=0~\text{ in }~{\mathbb{G}}\setminus\Omega.
\end{align}
and the fractional sub-Laplacian
\begin{align} \label{pro1.3}
		(-\Delta_{{\mathbb{G}}})^s u&=\lambda u,~\text{in}~\Omega,\nonumber\\
		u&=0~\text{ in }~{\mathbb{G}}\setminus\Omega,
	\end{align} 
    
where $\Omega$ is a bounded domain in ${\mathbb{G}}$ of homogeneous dimension $Q$ and $0<s<1<p<\infty$ with $Q>ps$. Note that for $p=2$, the problem \eqref{sp eigen} reduces to the problem \eqref{pro1.3}. We say that $u\in X_0^{s,p}(\Omega)$ is a non-trivial weak solution (eigenfunction) to \eqref{sp eigen} if, 
\begin{equation}\label{ev-soln}
	\langle \mathcal{A}(u),v\rangle=\lambda\int_{\Omega} |u|^{p-2}uv dx~\forall\,v\in X_0^{s,p}(\Omega),
\end{equation}
where the map $\mathcal{A}:X_0^{s,p}(\Omega)\rightarrow (X_0^{s,p}(\Omega))^*$ is given by
\begin{equation}\label{map A}
	\langle \mathcal{A}(u),v\rangle=\int_{\mathbb{G} \times \mathbb{G}}  \frac{|u(x)-u(y)|^{p-2}(u(x)-u(y))(v(x)-v(y))}{|y^{-1}x|^{Q+ps}}dxdy,~\forall\,u,v \in X_0^{s,p}(\Omega).
\end{equation}
Define the space $\mathcal{M}$ and the $C^1$ energy functional $I$ as
\begin{equation}\label{M}
	\mathcal{M}=\{u\in X_0^{s,p}(\Omega): p\|u\|_p=1\} \text{ and }I(u)=\frac{1}{p}\int_{\mathbb{G}\times \mathbb{G}}\frac{|u(x)-u(y)|^p}{|y^{-1}x|^{Q+ps}}dxdy.
\end{equation}
Then the eigenfunctions of \eqref{sp eigen} coincides with the critical points of $I$ on the space $\mathcal{M}$. We call $(\lambda, u)$, an eigenpair, where $\lambda$ is being referred to as eigenvalue. 
 
With these basic notations established, we are now prepared to prove Theorem \ref{ev-mainthmintro} for the eigenvalue problems \eqref{pro1.3} and \eqref{sp eigen}. For the sake of convenience, we restate the theorem here. 
 \begin{theorem}\label{ev-mainthm} Let $0<s<1<p<\infty$ and let $\Omega$ be a bounded domain of a stratified Lie group $\mathbb{G}$ of homogeneous dimension $Q$. Then for $Q>ps$, the eigenvalues and eigenfunctions of problem \eqref{sp eigen} associated  $(-\Delta_{p,{\mathbb{G}}})^s$ have the following properties:
	\begin{enumerate}[label=(\roman*)]
	\item 	The first eigenvalue $\lambda_1$ of \eqref{sp eigen} is given by 
	\begin{equation}\label{eigvaeq}
		\lambda_1:=\lambda_1(\Omega)=\inf_{\overset{u \in X_0^{s,p}(\Omega)}{\|u\|_p=1} } \int_{\mathbb{G} \times \mathbb{G}}  \frac{|u(x)-u(y)|^p}{|y^{-1}x|^{Q+ps}}dxdy
	\end{equation} or equivalently, 
	\begin{equation} \label{eigva}
		\lambda_1:=\inf_{u \in X_0^{s,p}(\Omega) \setminus \{0\}} \frac{ \int_{\mathbb{G} \times \mathbb{G}}  \frac{|u(x)-u(y)|^p}{|y^{-1}x|^{Q+ps}}dxdy}{\int_{\Omega} |u(x)|^p\,dx}
	\end{equation}
		
\item  There exists a non-negative function $e_1 \in X_0^{s,p}(\Omega),$ the eigenfunction corresponding to $\lambda_1,$ attaining the minimum in \eqref{eigva}, that is, $\|e_1\|_{L^p(\Omega)}=1$ and 
 $$\lambda_1=\int_{\mathbb{G} \times \mathbb{G}}  \frac{|e_1(x)-e_1(y)|^p}{|y^{-1}x|^{Q+ps}}dxdy.$$ In particular, the first eigenvalue $\lambda_1$ is principle and there exists a constant $C=C(Q,p,s)>0$ such that $\lambda_1(\Omega)\geq C |\Omega|^{-\frac{ps}{Q}}$.

		\item The first eigenvalue $\lambda_1$ of the problem \eqref{pro1.3} is simple and the corresponding eigenfunction $\phi_1$ is the only eigenfunction of constant sign, that is, if $u$ is an eigenfunction associated to an eigenvalue $\nu>\lambda_1(\Omega)$, then $u$ must be sign-changing.
  \item The first eigenvalue $\lambda_1$ of the problem \eqref{pro1.3} is isolated.
  \item The eigenfunctions for positive eigenvalues to \eqref{sp eigen} are bounded in $\mathbb{G}$.
  \item Assume that $(\Omega_j)$ is an increasing sequence of domains such that $\Omega=\cup_{j=1}^{\infty}\Omega_j$. Then $\lambda_1(\Omega_j)$ decreases to $\lambda_1(\Omega)$.
   \item The set of all eigenvalues, that is the spectrum $\sigma(s,p)$ to \eqref{sp eigen} is closed.
   
		\item The set of eigenvalues of the problem \eqref{sp eigen} consists of a sequence $(\lambda_n)$ with \begin{align}
     0<\lambda_1<\lambda_2 \leq \ldots \leq \lambda_n \leq \lambda_{n+1} \leq \ldots~\text{and }&
      \lambda_n \rightarrow \infty\,\,\,\text{as}\,\,n \rightarrow \infty.
  \end{align}
 \item Moreover, when $p=2$, for each $n \in \N$, the sequence of eigenvalues can be characterized as follows:
  \begin{equation}\label{eq3.10}
      \lambda_{n+1}:= \min_{\overset{u \in \mathbb{P}_{n+1}}{\|u\|_{L^2(\Omega)}=1} } \int_{\mathbb{G} \times \mathbb{G}}  \frac{|u(x)-u(y)|^2}{|y^{-1}x|^{Q+2s}}dxdy,
  \end{equation}
or, equivalently
 \begin{equation}\label{eq3.10-a}
	\lambda_{n+1}:= \min_{{u \in \mathbb{P}_{n+1}}\setminus\{0\}} \frac{ \int_{\mathbb{G} \times \mathbb{G}}  \frac{|u(x)-u(y)|^2}{|y^{-1}x|^{Q+2s}}dxdy}{\int_{\Omega} |u(x)|^2\,dx},
\end{equation}
  where 
  \begin{equation} \label{eq3.11}
      \mathbb{P}_{n+1}:= \left\{ u \in X_0^{s,2}(\Omega): \int_{\mathbb{G} \times \mathbb{G}}  \frac{(u(x)-u(y))(e_j(x)-e_j(y))}{|y^{-1}x|^{Q+2s}}dxdy=0,\right\} \end{equation}
 for all $j=1,2, \ldots, n$ with $0<\lambda_1<\lambda_2 \leq \ldots \leq \lambda_n \leq \lambda_{n+1} \leq \ldots\infty$. 
		\item For $p=2$ and for any $n \in \N$ there exists a function $e_{n+1} \in \mathbb{P}_{n+1}$,  the eigenfunction corresponding to $\lambda_{n+1},$ attaining the minimum in \eqref{eigva}, that is, $\|e_{n+1}\|_{L^2(\Omega)}=1$ and 
 \begin{equation}\label{eq x}
 	\lambda_{n+1}=\int_{\mathbb{G} \times \mathbb{G}}  \frac{|e_{n+1}(x)-e_{n+1}(y)|^2}{|y^{-1}x|^{Q+2s}}dxdy.
 \end{equation}
 \item For $p=2$, the sequence $(e_n)_{n \in \N}\subset X_0^{s,2}(\Omega)$ of eigenfunctions corresponding to $\lambda_n$ is an orthonormal basis of $L^2(\Omega)$ and an orthogonal basis of $X_0^{s,2}(\Omega).$
 \item Moreover, for $p=2$, each eigenvalue $\lambda_n$ has finite multiplicity. More precisely, if $\lambda_n$ satisfies 
 $$\lambda_{n-1} < \lambda_n = \cdot =\lambda_{n+m}<\lambda_{n+m+1}$$ for some $m \in \mathbb{N}\cup\{0\},$ then the set of all eigenfunctions corresponding to $\lambda_n$ agrees with 
 $$\text{span} \{e_n, \cdots, e_{n+m}\}.$$
	\end{enumerate}	
\end{theorem}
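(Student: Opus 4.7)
The plan is to note that properties (i)--(v) are already established in \cite{GKR22}, so the proof concentrates on the new items (vi)--(xii). Throughout I will rely on the compact embedding $X_0^{s,p}(\Omega)\hookrightarrow L^p(\Omega)$ from Theorem \ref{l-3}, which is the workhorse of every argument below.

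For (vi), given an increasing exhaustion $\Omega_j\nearrow\Omega$, extension by zero yields the inclusion $X_0^{s,p}(\Omega_j)\hookrightarrow X_0^{s,p}(\Omega)$, so the variational characterization in (i) immediately gives $\lambda_1(\Omega)\le \lambda_1(\Omega_j)$ and the sequence is nonincreasing in $j$. For the matching upper bound, fix $\varepsilon>0$ and choose a nearly minimizing $\varphi\in C_c^\infty(\Omega)$ for $\lambda_1(\Omega)$; since $\mathrm{supp}\,\varphi$ is compact it lies in $\Omega_j$ for all large $j$, yielding $\lim_j\lambda_1(\Omega_j)\le\lambda_1(\Omega)+\varepsilon$. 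For (vii), take $\lambda_n\in\sigma(s,p)$ with $\lambda_n\to\lambda$ and normalized eigenfunctions $u_n$ with $\|u_n\|_p=1$; then $[u_n]_{s,p,\G}^p=\lambda_n$ is bounded, so up to a subsequence $u_n\rightharpoonup u$ in $X_0^{s,p}(\Omega)$ and $u_n\to u$ in $L^p(\Omega)$ by Theorem \ref{l-3}, forcing $\|u\|_p=1$. Testing the weak formulation against $u_n-u$ and invoking the $(S_+)$-property of the map $\mathcal{A}$ defined in \eqref{map A} (standard Simon-type inequalities applied to the kernel $|y^{-1}x|^{-Q-ps}$) upgrades weak convergence to strong convergence in $X_0^{s,p}(\Omega)$, and the limit satisfies $\mathcal{A}(u)=\lambda|u|^{p-2}u$, so $\lambda\in\sigma(s,p)$. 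For (viii), I apply Ljusternik--Schnirelman theory on the symmetric $C^{1,1}$-manifold $\mathcal{M}$ in \eqref{M}: define
\begin{equation*}
\lambda_n:=\inf_{A\in\Gamma_n}\sup_{u\in A}\,p\,I(u),\qquad \Gamma_n=\{A\subset\mathcal{M}:A=-A,\ A\ \text{compact},\ \gamma(A)\ge n\},
\end{equation*}
with $\gamma$ the Krasnoselskii genus. The functional $I$ is even and coercive on $\mathcal{M}$, the Palais--Smale condition follows from the $(S_+)$-property together with the compact embedding, and the deformation lemma then produces a sequence of critical values $\lambda_n\to\infty$; together with (vii) this establishes the discreteness in (viii).

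For the linear case $p=2$, items (ix)--(xii) are handled by classical spectral theory applied to the solution operator $T:L^2(\Omega)\to L^2(\Omega)$ sending $f$ to the unique $u\in X_0^{s,2}(\Omega)$ with $(-\Delta_\G)^s u=f$ (existence/uniqueness from Lax--Milgram using the inner product \eqref{eq3.6}). Compactness of $T$ is immediate from the compact embedding of Theorem \ref{l-3}, and symmetry of the bilinear form \eqref{eq3.6} yields self-adjointness and positivity. The Hilbert--Schmidt spectral theorem then produces an orthonormal basis of $L^2(\Omega)$ of eigenfunctions $(e_n)$ of $T$ with eigenvalues $\mu_n\searrow 0$; setting $\lambda_n=\mu_n^{-1}$ gives the divergent eigenvalue sequence of $(-\Delta_\G)^s$ together with the orthogonality statement in (xi) (orthogonality in $X_0^{s,2}(\Omega)$ following from $\langle e_i,e_j\rangle_{X_0^{s,2}}=\lambda_i\int e_ie_j\,dx=0$ for $i\ne j$) and the finite multiplicity assertion (xii). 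The Courant--Fischer min-max principle applied to $T$ is exactly the recursive variational characterization \eqref{eq3.10}--\eqref{eq3.10-a}, and the Lagrange multiplier argument on the constrained minimization over $\mathbb{P}_{n+1}$ supplies the minimizer $e_{n+1}$ in (x) with the value \eqref{eq x}.

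The principal technical obstacle is the verification of the $(S_+)$-property of $\mathcal{A}$ in the subelliptic nonlocal setting, which underlies both (vii) and the Palais--Smale step in (viii). Euclidean proofs use pointwise convexity inequalities for $|\xi|^p$ that must be carried over to the group-theoretic kernel $|y^{-1}x|^{-(Q+ps)}$; fortunately the algebraic inequalities of Simon are purely pointwise in the difference $u(x)-u(y)$ and the kernel only plays the role of a positive weight, so the argument transfers once one is careful with the Fatou step on $\G\times\G$. A secondary subtlety, absent from classical Euclidean treatments but present here, is that the Faber--Krahn type lower bound used to derive (iv) cannot appeal to symmetric rearrangement (as the Remark points out); instead one argues by contradiction from (iii) together with the $L^\infty$-bound (v) and the compactness used in (vii), which is sufficient to isolate $\lambda_1$ without any rearrangement machinery.
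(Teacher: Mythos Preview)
Your outline for (vi) and (vii) matches the paper's proof almost exactly: monotonicity plus a compactly supported test function for (vi), and boundedness, compact embedding, and the Simon-type inequalities \eqref{ineq 3.16} to upgrade weak to strong convergence for (vii). No issues there.

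For (viii) you and the paper both invoke Ljusternik--Schnirelmann theory with the Krasnoselskii genus to produce the sequence $\lambda_n\to\infty$, but you skip the strict inequality $\lambda_1<\lambda_2$, which is part of the assertion and is \emph{not} a consequence of (vii). The paper devotes a substantial argument to this point: it introduces the mountain-pass characterization
\[
\lambda_2(\Omega)=\inf_{\phi\in\Gamma_1(\Omega)}\max_{u\in\mathrm{Im}(\phi)}\|u\|^p_{X_0^{s,p}(\Omega)},\qquad \Gamma_1=\{\phi:\mathbb{S}^1\to\mathcal{M}\ \text{odd, continuous}\},
\]
verifies the Palais--Smale condition for $I$ on $\mathcal{M}$, and then rules out $\lambda_2=\lambda_1$ by a connectedness argument (an odd continuous image of $\mathbb{S}^1$ cannot lie entirely in small $L^p$-neighborhoods of $\pm e_1$). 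Your sketch could instead invoke the LS multiplicity theorem (equality $\lambda_1=\lambda_2$ forces genus of the critical set at that level to be at least $2$, contradicting simplicity (iii)), but you should say so explicitly rather than appeal to (vii).

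For (ix)--(xii) you take a genuinely different route. The paper works by hand: it proves by induction that each minimizer $e_{n+1}$ over $\mathbb{P}_{n+1}$ solves the full weak equation \eqref{eq 3.26}, shows directly that every eigenvalue must appear in the list, and establishes completeness of $(e_n)$ via a Bessel-type partial-sum argument. You instead package everything into the spectral theorem for the compact, self-adjoint, positive solution operator $T=((-\Delta_\G)^s)^{-1}$ on $L^2(\Omega)$, which is cleaner and yields (ix)--(xii) in one stroke (the recursive formula \eqref{eq3.10} being the standard Rayleigh characterization on successive orthogonal complements). Both approaches are correct; yours is more economical but relies on off-the-shelf Hilbert space theory, while the paper's is more self-contained and makes the variational structure explicit.
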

\begin{proof}
	The proof of $(i)-(v)$ are straight forward from \cite{GKR22}. We will prove the rest of the claims.\\
	\underline{\textbf{{Proof of (vi):}}} Clearly, $\lambda_1(\Omega_j)$ is decreasing as $j$ increases. Thus the $\lim_{j \rightarrow+\infty}\lambda_1(\Omega_j)$ exists. Since $\lambda_1(\Omega_j)$ coincides with the Raleigh quotient $$\mathcal{R}(u)=\inf\limits_{u\in C_c^{\infty}(\Omega)}\cfrac{\int_{\mathbb{G}\times\mathbb{G}}\frac{|u(x)-u(y)|^p}{|y^{-1}x|^{Q+ps}}dxdy}{\int_{\Omega}|u(x)|^p dx},$$
	then for each $\epsilon>0$ there exists $\phi\in C_c^{\infty}(\Omega)$ such that $$\cfrac{\int_{\mathbb{G}\times\mathbb{G}}\frac{|u(x)-u(y)|^p}{|y^{-1}x|^{Q+ps}}dxdy}{\int_{\Omega}|u(x)|^p dx}<\lambda_1(\Omega)+\epsilon.$$
	Choose, $j_0$ large enough so that $supp(\phi)\subset\Omega_{j_0}$ and hence $\phi$ serves as a test function in Raleigh quotient. Thus, for all $j\geq j_0$, we have
	$$\lambda_1(\Omega_j)<\lambda_1(\Omega)+\epsilon.$$ This completes the proof.
	
	\underline{\textbf{{Proof of (vii):}}}
	    Suppose $\lambda\in\bar{\sigma(s,p)}$. Then, there exists a sequence of eigenvalues $(\lambda_n)$ of the problem $\eqref{sp eigen}$ such that $\lambda_n \rightarrow \lambda$. Denote $u_n$ as an eigenfunction corresponding to the eigenvalue $\lambda_n$ for all $n \in \mathbb{N}$. Thus, using the boundedness of $(\lambda_n)$ and the fact that $\int_\Omega |u_n|^p dx=1$, we put $v=u_n$ as the test function in \eqref{ev-soln} to obtain
	    	\begin{equation*}
	    	\lambda_n=\|u_n\|_{X_0^{s,p}(\Omega)}^p.
	    \end{equation*}
    Therefore, we get $(u_n)$ is bounded in the reflexive space $X_0^{s,p}(\Omega)$ asserting a weakly convergent subsequence, still denoted by $(u_n)$ such that $u_n\rightharpoonup u\in  X_0^{s,p}(\Omega)$. By Theorem \ref{l-3}, we conclude $(u_n)$ converges strongly to $u\in L^q(\Omega)$ for all $q\in[1,p_s^*)$. Therefore, using the Ho\"lder inequality, we get
    \begin{align}\label{eq 3.12}
    	|\langle \mathcal{A}(u_n), u_n-u \rangle|&= |\lambda_n \int_\Omega |u_n|^{p-2}u_n(u_n-u) dx|\nonumber\\
    	&\leq\lambda_n\|u_n-u\|_p\|u_n\|_p^{p-1}.
    \end{align}
    Thus, we have
    \begin{align}\label{eq 3.13}
    	\lim_{n\rightarrow\infty}\langle \mathcal{A}(u_n), u_n-u \rangle&=0.
    \end{align}
   Again, using $u_n \rightharpoonup u$ in $X_0^{s,p}(\Omega)$, we get
	    \begin{equation}\label{eq 3.14}
	    	\lim_{n\rightarrow\infty}\langle \mathcal{A}(u), u_n-u \rangle=0.
	    \end{equation}
    Thus, from \eqref{eq 3.13} and \eqref{eq 3.14}, we obtain
    \begin{equation}\label{eq 3.15}
    	\lim_{n\rightarrow\infty}\langle \mathcal{A}(u_n)-\mathcal{A}(u), u_n-u \rangle=0.
    \end{equation}
    Recall the following scalar inequality from \cite[Lemma B.4]{BP16}.
\begin{equation}\label{ineq 3.16}
	(|a|^{p-2}a-|b|^{p-2}b)(a-b) \geq \begin{cases}
		2^{2-p} |a-b|^p, &\text{ if } p \geq2,\\
		(p-1)|b-a|^2(|b|^2+|a|^2)^{\frac{p-2}{2}}, &\text{ if } 1<p<2.
	\end{cases}
\end{equation}
Therefore, for $p\geq 2$, using \eqref{ineq 3.16}, we obtain
\begin{equation}\label{eq 3.17}
	\|u_n-u\|^p\leq c(p)\langle \mathcal{A}(u_n)-\mathcal{A}(u), u_n-u \rangle
\end{equation}	    
On the other hand, for $1<p<2$, we have
\begin{align}\label{eq 3.18}
	\|u_n-u\|^p&\leq c(p)^{\frac{p}{2}}\langle \mathcal{A}(u_n)-\mathcal{A}(u), u_n-u \rangle^{\frac{p}{2}}(\|u_n\|^p+\|u\|^p)^{\frac{2-p}{2}}\nonumber\\
	&\leq c(p)^{\frac{p}{2}}\langle \mathcal{A}(u_n)-\mathcal{A}(u), u_n-u \rangle^{\frac{p}{2}}(\|u_n\|^{\frac{p(2-p)}{2}}+\|u\|^p)^{\frac{p(2-p)}{2}}\nonumber\\
			&\leq C^{\frac{p}{2}}\langle \mathcal{A}(u_n)-\mathcal{A}(u), u_n-u \rangle^{\frac{p}{2}}
\end{align}	     
Therefore, from \eqref{eq 3.17} and \eqref{eq 3.18}, we deduce that	    
\begin{equation}\label{eq 3.19}
	 \lim_{n\rightarrow\infty}\|u_n-u\|^p=0,
\end{equation}   
	 proving $u_n\rightarrow u$ strongly in $\X$. Since $(\lambda_n, u_n)$ is eigenpair, we have
	 \begin{equation}\label{eq 3.20}
	 		 	 \langle \mathcal{A}(u_n),v\rangle=\lambda_n \int_\Omega |u_n|^{p-2}u_n v dx, \,\, \text{ for all }   v\in X_0^{s,p}(\Omega).
	 \end{equation}
 Since $\Omega$ is bounded, using $u_n\rightharpoonup u\in  X_0^{s,p}(\Omega)$, $u_n\rightarrow u\in L^q(\Omega)$ for all $q\in[1,p_s^*)$ and the H\"older inequality, we get
 \begin{align}
 	\int_\Omega \left( |u_n|^{p-2}u_n-|u|^{p-2}u \right)v dx \rightarrow 0~\text{ and } \langle \mathcal{A}(u_n),v\rangle\rightarrow \langle \mathcal{A}(u),v\rangle \,\, \text{ as } n\rightarrow \infty.
 \end{align}
	 Hence, passing to the limit as $n \rightarrow \infty$ in \eqref{eq 3.20}, we obtain
	 \begin{align}\label{eq 3.21}
	 	\langle \mathcal{A}(u),v\rangle&=\lim_{n\rightarrow\infty}\langle \mathcal{A}(u_n),v\rangle=\lambda_n \int_\Omega |u_n|^{p-2}u_n v dx\nonumber\\
	 	 &=\lambda \int_\Omega |u|^{p-2}u v dx \,\, \text{ for all }   v\in X_0^{s,p}(\Omega). 
	 \end{align}
	 Also, $\|u\|_p=\lim_{n\rightarrow\infty}\|u_n\|_p=1$. Hence, $(\lambda, u)$ is an eigenpair to \eqref{sp eigen}.\\
	 \underline{\textbf{{Proof of (viii):}}} We employ the \textit{Ljusternik-Schnirelmann principle} \cite{S88} for the existence of a sequence of eigenvalues. We first recall some fundamental properties of the Krasnoselskii genus \cite{S88}, Proposition 2.3]. Let $\Sigma=\{A \subset X_0^{s,p}(\Omega) \setminus \{0\} \mid A \text{ is compact and }A=-A \}$. For every $A\in\Sigma$, the genus of $A$ is defined as
	    \begin{equation}
	    	\gamma(A)=\min\{k\in\mathbb{N}:\,\exists\,\phi\in C(A,\mathbb{R}^k\setminus\{0\}) \text{ such that } \phi(x)=\phi(-x)\}
	    \end{equation}
	We denote $\gamma(A)=\infty$ if the minimum does not exist. We will use the property: {\it Let $Z$ be a subspace of $\X$ with codimension $k$ and $\gamma(A)>k$, then $A \cap Z \neq \emptyset$}. To apply the \textit{Ljusternik-Schnirelmann principle} on $I$, we note that $I(0)=0$, $I$ is even and of class $C^1$. Moreover, $I$ is bounded from below on $X_0^{s,p}(\Omega)$.\\
    \textbf{Claim:} $I$ satisfies the Palais-Smale condition on $\mathcal{M}=\{u\in X_0^{s,p}(\Omega): p\|u\|_p=1\}$.\\
    \textbf{Proof of the Claim:} Let $(u_n)\subset\mathcal{M}$ be a sequence such that 
    $I(u_n)\leq C$ for all $n\in\mathbb{N}$ and $\lim_{n\rightarrow\infty}\langle I'(u_n)-\tau_nJ'(u_n),\varphi\rangle=0~\forall\,\varphi\in\X$, where $\tau_n=\frac{\langle I'(u_n), u_n\rangle}{\langle J'(u_n), u_n\rangle}$ and $J(u)=\frac{1}{p}\|u\|_{L^p(\Omega)}^p$. We need to show $(u_n)$ possesses a strongly convergent subsequence in $\mathcal{M}$. Indeed, we have $$0\leq \|u_n\|^p_{\X}\leq pI(u_n)\leq Cp,~\forall\, n\in\mathbb{N},$$
    implying the boundedness of $(u_n)$ in the reflexive space $\X$. Therefore, there exists $u\in \X$ such that $u_n\rightharpoonup u$ weakly in $\X$. From Theorem \ref{l-3}, we conclude $u_n\rightarrow u$ strongly in $L^q(\Omega)$ for all $q\in[1,p_s^*)$ and $u_n\rightarrow u$ a.e. in $\Omega$. Now, we proceed with similar arguments as in the \textit{proof of (vii)} to conclude that $u\neq0$ and $\|u_n-u\|_{\X}\rightarrow 0$ as $n\rightarrow\infty$. Thus, the claim is proved. 
    
    Therefore, from \textit{Ljusternik-Schnirelmann theory} \cite{S88} we conclude that there exists a sequence of positive critical values
	    \begin{equation}\label{eq 3.25}
	    	0<\lambda_1 \leq \lambda_2 \leq \cdots \leq \lambda_n \leq\cdots,
	    \end{equation}
	    where
	    $$ \lambda_n= \inf_{A \in \Sigma_n} \sup_{u \in A} pI(u) \text{ and } \Sigma_n= \{ A \in \Sigma : \gamma(A) \geq n \}.$$
	   We now prove $\lambda_n \rightarrow \infty$ as $n \rightarrow \infty$. Indeed, using the reflexivity and separability of $\X$, there exist sequences $(\nu_j)$ in $\X$ and $(\nu_j^*)$ in $(\X)^*$ such that 	   $X_0^{s,p}(\Omega)=\overline{\text{span}}\{\nu_j:j\in\mathbb{N}\}$ and $(X_0^{s,p}(\Omega))^*=\overline{\text{span}}\{\nu_j^*:j\in\mathbb{N}\}$. Moreover, $\langle \nu_i,\nu_j^* \rangle= \delta_{ij}$, for all $i,j=1,2,...,$. Define,
	    \begin{equation*}
	    	X_n=\overline{\mbox{span}}\{\nu_{1}, \nu_{2}, \cdots, \nu_n\} \text{ and } Z_n=\overline{\mbox{span}}\{\nu_{n}, \nu_{n+1}, \cdots\}.
	    \end{equation*}
    Choose, $A\in \Sigma_n$. Thus we have $\gamma(A)\geq n$ and hence using the properties of genus we derive $A\cap Z_n\neq\emptyset$. Therefore, we claim that $\lim_{n\rightarrow\infty}\alpha_n=\infty$, where
    $$\alpha_n=\inf_{A \in \Sigma_n} \sup_{u \in A\cap Z_n} pI(u).$$
    Suppose, the claim fails. Then there exists a sequence $(u_n)$ in $ A\cap Z_n$ such that $\int_{\Omega}|u_n|^p dx=1~\text{ and }~0\leq\alpha_n\leq pI(u_n) <\infty,\ n \in \mathbb{N}$ which implies that $(u_n)$ is bounded. Therefore, there exists a subsequence, still denoted by $(u_n)$ such that $u_n\rightharpoonup u \in\X$ weakly and $\int_{\Omega}|u|^p dx=1$. Therefore, we get $\langle \nu_j^*,u\rangle=\lim_{n\rightarrow\infty}\langle \nu_j^*,u_n\rangle=0$. This creates a contradiction due to the fact
    $$0=\int_{\Omega}|u|^p dx=\int_{\Omega}|u_n|^p dx=1.$$
    Thus, $\alpha_n\rightarrow\infty$ as $n\rightarrow\infty$. Note that $\alpha_n\leq\lambda_n$ and hence we get $\lambda_n\rightarrow\infty$ as $n\rightarrow\infty$.\\

 	It remains to prove that $\lambda_2>\lambda_1$. We follow Brasco and Parini \cite{BP16}. Using \eqref{M}, we define $$\Gamma_1(\Omega)=\left\{\phi:\mathbb{S}^1\to \mathcal{M}\, :\, \phi \mbox{ is odd and continuous}\right\}$$
 	and
 	\begin{equation}\label{eq3.27}
 		\lambda_2(\Omega)=\inf_{\phi\in \Gamma_1(\Omega)} \max_{u\in\mathrm{Im}(\phi)} \|u\|^p_{\X}.
 	\end{equation}
 	We first show that $\lambda_2$ given by \eqref{eq3.27} is an eigenvalue of \eqref{sp eigen}.\\
 	{\textbf{Claim:}} $\lambda_2(\Omega)$ is an eigenvalue of \eqref{sp eigen}.\\
 We show that $\lambda_2$ is a mountain pass type critical value for $I$ over $\mathcal{M}$. We first verify that $I$ satisfies the Palais-Smale condition on $\mathcal{M}$. Let $(u_n)$ be a PS sequence in $\mathcal{M}$, that is,
 		\begin{equation}\label{ps I}
 			I(u_n)\leq c~~ \text{ and }~~ \lim_{n\to\infty}\left\|DI(u_n)_{\restriction T_{u_n}\mathcal{M}}\right\|_*=0,
 		\end{equation}
 		where $DI(u_n)$ and $T_{u_n} \mathcal{M}$ denote the differential of $I$ at $u_n$ and the tangent space to $\mathcal{M}$ at $u_n$, which is defined as
 		\begin{equation}
 			T_{u_n} \mathcal{M}=\left\{\varphi\in \X\, :\, \int_\Omega |u_n|^{p-2}\, u_n\, \varphi\, dx=0\right\}.
 		\end{equation}
 	From, \eqref{ps I}, we have, $u_n$ is bounded. Therefore, using Theorem \ref{l-3} we get $u\in \X$ such that $u_n\rightharpoonup u$ weakly in $\X$ and $u_n\rightarrow u$ strongly in $L^p(\Omega)$. On using $u_n\in\mathcal{M}$, we have $u\in\mathcal{M}$. Moreover, the sequence $(\tau_n)$ converges to $1$, where $\tau_n$ is defined by
 	\begin{equation}\label{tau}
 		\tau_n := \int_\Omega |u_n|^{p-2}\, u_n\, u\, dx.
 	\end{equation}
 	Again, from \eqref{ps I}, we derive that there exists a sequence of positive numbers $(\delta_n)$ such that
 		\begin{equation}\label{ps2}
 		\delta_n\rightarrow0 \text{ and }	\left|\langle DI(u_n), \varphi\rangle\right|\leq \delta_n\, \|\varphi\|_{\X},~\forall\,\varphi\in T_{u_n} \mathcal{M}.
 		\end{equation}
 		We define the sequence $(v_n)$ as
 		\begin{equation*}
 			v_n=\tau_n\, u_n-u,
 		\end{equation*}
 		where, $\tau_n$ as in \eqref{tau}. Clearly, $v_n \in T_{u_n}\mathcal{M},\,\forall\,n\in\mathbb{N}$. Therefore, from \eqref{ps2} and using the uniform boundedness of $(u_n)$ due to $\left|\langle DI(u_n),u_n\rangle\right|=p|I(u_n)|$, we obtain
 		\begin{equation}\label{eq ps}
 			0\leq\lim_{n\to\infty}\left|\langle DI(u_n), u_n-u\rangle\right|\leq \lim_{n\to\infty}\left|\langle DI(u_n), v_n\rangle\right|+\lim_{n\to\infty}|1-\tau_n|\left|\langle DI(u_n), u_n\rangle\right|=0.
 		\end{equation}
 	 Since, $u_n\rightharpoonup u$, we have 
 	 \begin{equation}\label{eq ps1}
 	 	\lim_{n\to\infty}\Big|DI(u)[u_n-u]\Big|=0.
 	 \end{equation}
 	Therefore, from \eqref{eq ps} and \eqref{eq ps1}, we get
 		\begin{equation}\label{eq ps2}
 			\lim_{n\to\infty}\left|\langle DI(u_n)- DI(u), u_n-u\rangle\right|=0.
 		\end{equation}
 Therefore, proceeding similar to the steps as in the \textit{Proof of (vii)}, we conclude that $u_n\rightarrow u$ strongly in $\X$. Therefore, by the mountain pass theorem, there exists a critical point of $I$ attaining the critical value $\lambda_2(\Omega)$. This proves the claim.\\
 	It remains to show that $\lambda_2(\Omega)>\lambda_1(\Omega)$. We proceed with the method of contradiction. Let
 	\begin{equation*}
 		\lambda_2(\Omega)=\inf_{\phi\in \Gamma_1(\Omega)} \max_{u\in\mathrm{Im}(\phi)} \|u\|^p_{\X}=\lambda_1(\Omega)
 	\end{equation*}
 be true. Then, from the definition of $\lambda_2(\Omega)$, for each $n\in\mathbb{N}$, there exists $\phi_n\in\Gamma_1$ such that 
 		\begin{equation}\label{eq ev1}
 			\max_{u\in \phi_n(\mathbb{S}^1)} \|u\|^p_{\X} \leq \lambda_1(\Omega)+\frac{1}{n}.
 		\end{equation}
 	Let $e_1>0$ be the first eigenfunction to \eqref{sp eigen}. We know that $e_1>0$ or $e_1<0$ in $\Omega$.	Fix $\epsilon>0$, sufficiently small. Consider the following two disjoint neighberhoods of $e_1$
 	\begin{equation*}
 		B_{\epsilon}^+ = \{u \in \mathcal{M} : \|u-e_1\|_{L^p(\Omega)} < \epsilon\}
 		\qquad\text{and}\qquad
 		B_{\epsilon}^- = \{u\in \mathcal{M} : \|u- (-e_1)\|_{L^p(\Omega)}<\epsilon \}.
 	\end{equation*}
 	Note that $\phi_n(\mathbb{S}^{1})\not\subset B_{\epsilon}^+\cup B_{\epsilon}^-$ due to the fact $\phi_n\in\Gamma_1$ implying that $\phi_n(\mathbb{S}^{1})$ is symmetric and connected. Therefore, there exists $u_n\in \phi_n(\mathbb{S}^{1})\setminus \left(B_{\epsilon}^+\cup B_{\epsilon}^- \right)$ for each $n\in\mathbb{N}$. Moreover, the sequence $(u_n)$ is bounded in $\X$, thanks to \eqref{eq ev1}. Therefore, from Theorem \ref{l-3} there exists $v\in \mathcal{M}$ and a subsequence of $(u_n)$, (still denoted by $(u_n)$) such that $u_n\rightharpoonup v$ weakly in $\X$ and $u_n\rightarrow v$ strongly in $L^p(\Omega)$. By the choice of $u_n$, we have
 	\begin{equation*}
 		\|v\|^p_{\X}
 		\le \liminf_{n\to\infty} \|u_n\|^p_{\X}=\lambda_1(\Omega),
 	\end{equation*}
 	implying that $v\in \mathcal{M}$ is a global minimizer for $I$. Thus we get either $v=e_1$ or $v=-e_1$. Again, since $u_n\rightarrow v$ strongly in $L^p(\Omega)$, we have $v\in\mathcal{M}\setminus\left(B_{\epsilon}^+\cup B_{\epsilon}^- \right)$ giving us a contradiction. Hence, $\lambda_2(\Omega)>\lambda_1(\Omega)$. It is noteworthy to mention here that $\lambda_2(\Omega)$ admits only sign-changing eigenfunctions, thanks to (iii). In particular, using the isolatedness of $\lambda_1(\Omega)$, one can derive that if $\lambda>\lambda_1(\Omega)$ is an eigenvalue to \eqref{sp eigen}, then $\lambda\geq\lambda_2(\Omega)$.\\
    \underline{\textbf{{Proof of (ix):}}}
    Note that for $p=2$, the space $\X=X_0^{s,2}(\Omega)$ is a Hilbert space. From the definition of $\mathbb{P}_n$, it follows that each $\mathbb{P}_n$ is weakly closed and $\mathbb{P}_{n+1}\subset\mathbb{P}_n\subset\X$ for all $n\in\mathbb{N}$. This implies that
    \begin{equation}\label{2 lamb}
    	0<\lambda_{1} \leq \lambda_{2} \leq \ldots \leq \lambda_{n} \leq \lambda_{n+1} \leq \ldots.
    \end{equation}
    Thanks to the simplicity of $\lambda_1$, we conclude that $\lambda_1\neq\lambda_2$. Indeed, $\lambda_1=\lambda_2$ implies $e_2\in\mathbb{P}_2$ is an eigenfunction for $\lambda_1$. Thus $e_2=ce_1$ for some $c\neq0$. Now, $e_2\in\mathbb{P}_2$ gives that 
    $$0=\langle e_1,e_2\rangle=c\|e_1\|^2$$
    contradicts that $e_1\neq0$. thus, $\lambda_1\neq\lambda_2$.
    
    Again, we have
    \begin{equation}\label{eq 3.26}
    	\int_{\mathbb{G} \times \mathbb{G}}\frac{(e_{n+1}(x)-e_{n+1}(y))(\varphi(x)-\varphi(y))}{|y^{-1}x|^{Q+2s}} dxdy=\lambda_{n+1} \int_{\Omega} e_{n+1}(x) \varphi(x) dx, ~ \forall \varphi \in \mathbb{P}_{n+1}.
    \end{equation}

We need to show that \eqref{eq 3.26} holds for all $\varphi\in X_0^{s,2}(\Omega)$, to claim $\lambda_{n+1}$ is an eigenvalue. We use the principle of induction to establish our result. Clearly, $k=1$ is true since $\lambda_1$ is an eigenvalue. Assume, the result for $k=1,2,...,n$. We show it is true for $k=n+1$. Let us decompose the space $X_0^{s,2}(\Omega)$ as
	   \begin{equation}\label{eq 3.27}
	   	X_0^{s,2}(\Omega)=\operatorname{span}\left\{e_{1}, \ldots, e_{n}\right\} \oplus\left(\operatorname{span}\left\{e_{1}, \ldots, e_{n}\right\}\right)^{\perp}=\operatorname{span}\left\{e_{1}, \ldots, e_{n}\right\} \oplus \mathbb{P}_{n+1}.
	   \end{equation}
	    Note that $X_0^{s,2}(\Omega)$ being a Hilbert space, $\mathbb{P}_{n+1}$ as an orthogonal complement $\perp$ is well-defined due the inner product $\langle\cdot, \cdot\rangle$. Let $\varphi\in X_0^{s,2}(\Omega)$. Then there exist $\varphi_1\in span{\{e_{1}, \ldots, e_{n}\}}$ and $\varphi_2\in \mathbb{P}_{n+1}$ such that $\varphi=\varphi_1+\varphi_2$ and $\varphi_{1}=\sum_{i=1}^{n} c_{i} e_{i}$ for some scalars $c_1,c_2, \ldots, c_n$. Now, testing \eqref{eq 3.26} with $\varphi_{2}=\varphi-\varphi_{1}$, we derive that
	    
	    \begin{align}\label{eq 3.28}
	    	&\int_{\mathbb{G} \times \mathbb{G}}\frac{(e_{n+1}(x)-e_{n+1}(y))(\varphi(x)-\varphi(y))}{|y^{-1}x|^{Q+2s}} dxdy-\lambda_{n+1} \int_{\Omega} e_{n+1}(x) \varphi(x) dx\nonumber \\
	    	=&\int_{\mathbb{G} \times \mathbb{G}}\frac{(e_{n+1}(x)-e_{n+1}(y))(\varphi_1(x)-\varphi_1(y))}{|y^{-1}x|^{Q+2s}} dxdy
	    	-\lambda_{n+1} \int_{\Omega} e_{n+1}(x) \varphi_{1}(x) dx \nonumber\\
	    	=&\sum_{i=1}^{n} c_{i}\left[\int_{\mathbb{G} \times\mathbb{G}}\frac{\left(e_{n+1}(x)-e_{n+1}(y)\right)\left(e_{i}(x)-e_{i}(y)\right)}{|y^{-1}x|^{Q+2s}}dxdy-\lambda_{n+1} \int_{\Omega} e_{n+1}(x) e_{i}(x) dx\right].
	    \end{align}
	    Again, using the fact $\in \mathbb{P}_{n+1}$ and for $p=2$, testing \eqref{ev-soln} for each $e_{i}$, $i=1,2, \ldots, n$, we get
	    \begin{equation}\label{eq3.29}
	    	0=\int_{\mathbb{G} \times\mathbb{G}}\frac{\left(e_{n+1}(x)-e_{n+1}(y)\right)\left(e_{i}(x)-e_{i}(y)\right)}{|y^{-1}x|^{Q+2s}}dxdy=\lambda_{i} \int_{\Omega} e_{n+1}(x) e_{i}(x) d x
	    \end{equation}
	   Since, $\lambda_i\neq0,\,\forall\,i\in\mathbb{N}$, we deduce from \eqref{eq 3.28} that 
	    \begin{align}\label{eq 3.30}
	   	&\int_{\mathbb{G} \times \mathbb{G}}\frac{(e_{n+1}(x)-e_{n+1}(y))(\varphi(x)-\varphi(y))}{|y^{-1}x|^{Q+2s}} dxdy-\lambda_{n+1} \int_{\Omega} e_{n+1}(x) \varphi(x) dx=0,
	   \end{align}
	   for all $\varphi\in X_0^{s,2}(\Omega).$ Thus, $(\lambda_{n+1},e_{n+1})$ is an eigenpair of \eqref{pro1.3}. In fact, any eigenvalue of \eqref{pro1.3} is a member of the sequence $(\lambda_n)$ defined as in \eqref{eq3.10}. Indeed, let $\lambda\neq\lambda_n$ for all $n\in\mathbb{N}$ and $e$ be the corresponding eigenfunction such that $\|e\|_2=1$. Therefore, from \eqref{M}, we get
	   \begin{equation}\label{eq3.31}
	   	\lambda=\int_{\mathbb{G} \times \mathbb{G}}\frac{|e(x)-e(y)|^2}{|y^{-1}x|^{Q+2s}} dxdy=2I(e)\geq 2I(e_1)=\lambda_1,
	   \end{equation}
	   as $(\lambda_1,e_1)$ is the first eigenpair. Since, $\lambda\neq\lambda_n$ for all $n\in\mathbb{N}$ and $\lim_{n\rightarrow\infty}\lambda_n=\infty$, we conclude that $\lambda\in(\lambda_{n_0},\lambda_{n_0+1})$ for some $n_0\in\mathbb{N}$. Note that $e \notin \mathbb{P}_{n_0+1}$. If $e \in \mathbb{P}_{n_0+1}$, then from \eqref{eq3.11} and \eqref{eq3.31}, we get $\lambda=2I(e)\geq\lambda_{n_0+1}$ contradicts the fact that $\lambda\in(\lambda_{n_0},\lambda_{n_0+1})$. Thus, we have $e \notin \mathbb{P}_{n_0+1}$ and hence, we obtain $\langle e,e_i\rangle\neq0$ for some $i\in\{1,2,\ldots,n_0\}$. This contradicts that for the eigenpairs $(\lambda,e_{\lambda})$ and $(\mu,e_{\mu})$ with $\lambda\neq\mu$, we have $ \langle e_{\lambda},e_{\mu}\rangle=0$. Indeed, without loss of generality, we may assume that $e_{\lambda}\not\equiv0,\, e_{\mu}\not\equiv0$ such that $\|e_{\lambda}\|_2=1$ and $\|e_{\mu}\|=1$. Therefore, from \eqref{ev-soln} and \eqref{eq3.6}, we deduce that
	   \begin{align}
	   	\lambda\int_{\Omega} e_{\lambda}(x)e_{\mu}(x)dx&=\int_{\mathbb{G} \times \mathbb{G}}\frac{(e_{\lambda}(x)-e_{\lambda}(y))(e_{\mu}(x)-e_{\mu}(y))}{|y^{-1}x|^{Q+2s}} dxdy\nonumber\\
	   	&=\mu\int_{\Omega} e_{\lambda}(x)e_{\mu}(x)dx\nonumber\\
	   	\Rightarrow&(\lambda-\mu)\int_{\Omega} e_{\lambda}(x)e_{\mu}(x)dx=0,
	   \end{align}
	   proving that $\int_{\Omega} e_{\lambda}(x)e_{\mu}(x)dx=0$. Therefore, we get
	   \begin{align}\label{eq3.33}
	   	\langle e_{\lambda},e_{\mu}\rangle=&\int_{\mathbb{G} \times \mathbb{G}}\frac{(e_{\lambda}(x)-e_{\lambda}(y))(e_{\mu}(x)-e_{\mu}(y))}{|y^{-1}x|^{Q+2s}} dxdy=0.
	   \end{align}
	  Hence, we conclude that $\lambda\neq\lambda_n$ for all $n\in\mathbb{N}$ cannot be true. In other words, $\{\lambda_1,\lambda_2,\ldots,\lambda_n,\ldots\}$ is the set of all eigenvalues of \eqref{pro1.3} defined by \eqref{eq3.10}. This completes the proof.\\
	    \underline{\textbf{{Proof of (x):}}} Clearly, $\lambda_{n+1}$, the minimum of \eqref{eq3.10} is achieved at some $e_{n+1}\in\mathbb{P}_{n+1}$. Now, using \eqref{eq 3.30}, we get $e_{n+1}$ is an eigenfunction of \eqref{pro1.3}, completing the proof.\\
	    \underline{\textbf{{Proof of (xi):}}}
	We first prove the orthogonality of $(e_n)$. Let $i,j\in\mathbb{N}$ with $i> j$, then $i-1\geq j$. Thus $e_{i} \in \mathbb{P}_{i}=\left(\operatorname{span}\left\{e_1, e_2, \ldots, e_{i-1}\right\}\right)^{\perp} \subseteq\left(\operatorname{span}\left\{e_{j}\right\}\right)^{\perp}$ which implies 
	\begin{equation}\label{eq3.34}
		\left\langle e_i, e_j\right\rangle=0.
	\end{equation}
	Since, $e_i$ is an eigenfunction of \eqref{pro1.3}, then putting $u=e_i$ and $v=e_j$ in \eqref{ev-soln}, we get
	\begin{equation}\label{eq3.35}
		\int_{\mathbb{G} \times \mathbb{G}}\frac{(e_{\lambda}(x)-e_{\lambda}(y))(e_{\mu}(x)-e_{\mu}(y))}{|y^{-1}x|^{Q+2s}} dxdy=\lambda_i \int_{\Omega} e_i(x) e_j(x) dx.
	\end{equation}
	Therefore, we have	
	\begin{equation}\label{eq3.36}
		\langle e_i,e_j\rangle=0=\int_{\Omega} e_i(x)e_j(x)dx,
	\end{equation}
	which proves the orthogonality. Note that $\|e_n\|_2=1$. Therefore, it is sufficient to prove that $(e_n)$ is a basis for $X_0^{s,2}(\Omega)$ and $L^2(\Omega)$. The proof is standard but from the readers' point of view, we will provide the details. Let $v\in X_0^{s,2}(\Omega)\setminus\{0\}$ be such that
	\begin{equation}\label{eq3.37}
		\left\langle v, e_{n}\right\rangle=0~\text{for all $n \in \mathbb{N}$ and $\|v\|_2=1$.}
	\end{equation}  
	Therefore, using \eqref{eq3.10}, we get
	\begin{equation*}
		2I(v)<\lambda_{n_0+1}=\min _{\substack{u \in \mathbb{P}_{n_0+1} \\\|u\|_{L^{2}(\Omega)}=1}} \int_{\mathbb{G} \times \mathbb{G}}\frac{|e_{n_0}(x)-e_{n_0}(y)|^2}{|y^{-1}x|^{Q+2s}} dxdy
	\end{equation*}
	for some $n_0 \in \mathbb{N}$ which implies that $v \notin \mathbb{P}_{n_0+1}$. Therefore, there exists $m \in \mathbb{N}$ such that $\left\langle v, e_m\right\rangle \neq 0$, contradicting \eqref{eq3.37}. Hence, we conclude that 
	\begin{equation}\label{eq3.38}
		\left\langle v, e_{n}\right\rangle=0,~\forall\,n \in \mathbb{N} \text{ implies } v\equiv0. 
	\end{equation}
Now, define $v_n:=\frac{e_n }{\|e_n\|_{X_0^{s,2}(\Omega)}}$ and for any $g \in X_0^{s,2}(\Omega)$, $g_{n}:=\sum_{i=1}^{n}\left\langle g, v_i\right\rangle v_i$.
Thus, we have 	
	\begin{equation}\label{eq3.39}
		g_n\in \operatorname{span}\left\{e_{1},e_2, \ldots, e_{n}\right\},~\forall\,n \in \mathbb{N}.
	\end{equation}
Since, $(e_n)$ is orthogonal, then defining	$w_n:=g-g_n$, we obtain	
	\begin{align}
		0 &\leq \|w_n\|^{2}=\langle w_n, w_n\rangle\nonumber\\
		&=\|g\|^{2}+\|g_n\|^{2}-2\langle g,g_n\rangle\nonumber\\
		&=\|g\|^{2}+\langle g_n, g_n\rangle-2 \sum_{i=1}^{n}\langle g, v_i\rangle^2\nonumber\\
		& =\|g\|^{2}-\sum_{i=1}^{n}\langle g, v_i\rangle^2\nonumber\\
		&\Rightarrow \sum_{i=1}^{n}\langle g, v_i\rangle^2\leq \|g\|^{2}~\text{ for all }n\in \mathbb{N}.
	\end{align}
This implies that the series $\sum_{i=1}^{\infty}\langle g, v_i\rangle^{2}$ is convergent. Thus the sequence of partial sums is Cauchy in $\mathbb{R}$. Let $t_n=\sum_{i=1}^{n}\langle g, v_i\rangle^{2}$. Then using the orthogonality of $(e_n)$ in $X_0^{s,2}(\Omega)$, we have	
	\begin{align*}
		\|w_m-w_n\|^{2}&=\left\|\sum_{i=n+1}^{m}\langle g, v_i\rangle v_i \right\|^{2} \\
		& =\sum_{i=n+1}^{m}\langle g, v_i\rangle^{2}\\
		&=t_m-t_n,~\text{ for all } m>n.
	\end{align*}
This asserts that $(w_n)$ is Cauchy sequence in the Hilbert space $X_0^{s,2}(\Omega)$. Thus, there exists $w\in X_0^{s,2}(\Omega)$ such that $w_n\rightarrow w$ as $n\rightarrow\infty$. Note that
\begin{equation}
	\langle w_m, v_n\rangle=\langle g, v_n\rangle-\langle g_m, v_n\rangle=\langle g, v_n\rangle-\langle g, v_n\rangle=0.
\end{equation}
	Therefore, we get $\langle w, v_n\rangle=\lim_{n\rightarrow\infty}\langle w_m, v_n\rangle=0,~\text{for all } n\in\mathbb{N}.$ Hence, from \eqref{eq3.38}, we have $w\equiv0$. therefore, we have
	\begin{equation}\label{eq3.42}
		\lim_{n\rightarrow\infty}g_n=g-w_n =g-w=g \in X_0^{s,2}(\Omega)
	\end{equation}
Thus from \eqref{eq3.39} and \eqref{eq3.42}, we get $(e_n)$ forms an orthogonal basis for $X_0^{s,2}(\Omega)$. Finally, we show that $(e_n)$ forms a basis for $L^2(\Omega)$. Indeed, for $\eta\in L^2(\Omega)$ choose $\eta_m\in C_c^{\infty}(\Omega)$ such that $\|\eta-\eta_m\|<\frac{1}{m}$. Obviously, $\eta_m\in X_0^{s,2}(\Omega)$. Therefore, there exists $m_n\in\mathbb{N}$ and $w_m\in span\{e_1, e_2,\ldots,e_{n_m}\}$ such that $\|\eta_m-w_m\|<\frac{1}{m}$, since, $(e_n)$ forms a basis for $X_0^{s,2}(\Omega)$. Thus, by embedding result, we get
\begin{equation*}
	\|\eta_m-w_m\|_{L^{2}(\Omega)}\leq C\|\eta_m-w_m\|_{X_0^{s,2}(\Omega)}\leq \frac{C}{m}
\end{equation*}
for some $C>0$ and from the triangle inequality, we obtain
\begin{equation*}
	\|\eta-w_m\|_{L^{2}(\Omega)} \leq \|\eta-\eta_m\|_{L^{2}(\Omega)}+\|\eta_m-w_m\|_{L^{2}(\Omega)} \leq \frac{C+1}{m},
\end{equation*}
	proving that $(e_n)$ forms a basis for $L^{2}(\Omega)$.\\
	\underline{\textbf{{Proof of (xii):}}} Suppose there exists $m \in \mathbb{N}\cup\{0\},$ such that
	\begin{equation}\label{eq3.43}
		\lambda_{n-1} < \lambda_n = \cdot =\lambda_{n+m}<\lambda_{n+m+1}.
	\end{equation}
	Therefore, from the previous part, we have every element of $\operatorname{span}\left\{e_n, \ldots, e_{n+m}\right\}$ is an eigenfunction of \eqref{pro1.3} for $\lambda_n=\ldots,=\lambda_{n+m}$. We again decompose the space $X_0^{s,2}(\Omega)$ as
	$$X_0^{s,2}(\Omega)=\operatorname{span}\left\{e_n, \ldots, e_{n+m}\right\} \oplus\left(\operatorname{span}\left\{e_n, \ldots, e_{n+m}\right\}\right)^{\perp}.	$$
	Let $\xi \not \equiv 0$ be an eigenfunction corresponding to $\lambda_{n}\in \operatorname{span}\left\{e_n, \ldots, e_{n+m}\right\}$. Therefore, there exist $\xi_1, \xi_2$ such that
	\begin{equation}\label{eq3.44}
		\xi=\xi_{1}+\xi_{2} \text{ with } \left\langle\xi_{1}, \xi_{2}\right\rangle=0,
	\end{equation} 
	where
\begin{equation}\label{eq3.45}
	\xi_{1} \in \operatorname{span}\left\{e_n, \ldots, e_{n+m}\right\} \text { and } \xi_{2} \in\left(\operatorname{span}\left\{e_n, \ldots, e_{n+m}\right\}\right)^{\perp}
\end{equation}
On testing \eqref{ev-soln} with $\xi$ as test function and using \eqref{eq3.44} we obtain
\begin{align}\label{eq3.46}
	\lambda_{n}\|\xi\|_{L^{2}(\Omega)}^{2}&=\int_{\mathbb{G} \times \mathbb{G}}\frac{|\xi(x)-\xi(y)|^2}{|y^{-1}x|^{Q+2s}} dxdy=\|\xi\|^{2}\nonumber\\
	&=\|\xi_{1}\|^{2}+\|\xi_{2}\|^2.
\end{align}
Note that $e_n, \ldots, e_{n+m}$ are eigenfunctions corresponding to $\lambda_{n}=\cdots=\lambda_{n+m}$. On using \eqref{eq3.45}, we have $(\lambda_{k}, \xi_1)$ is also an eigenpair. Again testing \eqref{ev-soln} with $\xi_{2}$ and using \eqref{eq3.44}, we derive that 
\begin{equation}\label{eq3.47}
	\int_{\Omega} \xi_{1}(x) \xi_{2}(x) dx=0, 
\end{equation}
since
\begin{align}\label{eq3.48}
	\lambda_{n} \int_{\Omega} \xi_{1}(x) \xi_{2}(x) dx&=\int_{\mathbb{G} \times \mathbb{G}}\frac{(\xi_1(x)-\xi_1(y))(\xi_2(x)-\xi_2(y))}{|y^{-1}x|^{Q+2s}} dxdy =\langle\xi_{1}, \xi_{2}\rangle=0.
\end{align}
Therefore, from \eqref{eq3.47}, we have
\begin{equation}\label{eq3.49}
	\|\xi\|_{L^{2}(\Omega)}^{2}=\left\|\xi_{1}+\xi_{2}\right\|_{L^{2}(\Omega)}^{2}=\left\|\xi_{1}\right\|_{L^{2}(\Omega)}^{2}+\left\|\xi_{2}\right\|_{L^{2}(\Omega)}^{2}.
\end{equation}
Since, $\xi_{1} \in \operatorname{span}\left\{e_n, \ldots, e_{n+m}\right\}$, there exist scalars $c_i,\,i=n,\ldots,n+m$ such that
\begin{equation}\label{eq3.50}
	\xi_{1}=\sum_{i=n}^{n+m} c_{i} e_{i}.
\end{equation}
Therefore, using \eqref{eq x} and the orthogonality, we get
\begin{align}\label{eq3.51}
	\|\xi_{1}\|^{2}&=\sum_{i=n}^{n+m} c_{i}^{2}\|e_{i}\|^{2}=\sum_{i=n}^{n+m} c_{i}^{2} \lambda_{i}
	=\lambda_{n} \sum_{i=n}^{n+m} c_{i}^{2}=\lambda_{n}\|\xi_{1}\|_{L^{2}(\Omega)}^{2}.
\end{align}
Now, $\xi, \xi_1$ are eigenfunctions imply that $\xi_2$ is also an eigenfunction for $\lambda_n$.
 Therefore, from \eqref{eq3.33} and \eqref{eq3.43}, we deduce
\begin{equation}\label{eq3.52}
	\langle\xi_{2}, e_{1}\rangle=\cdots=\langle\xi_{2}, e_{n-1}\rangle=0.
\end{equation}
Thus, we use \eqref{eq3.45} and \eqref{eq3.52} to obtain
\begin{equation}\label{eq3.53}
	\xi_{2} \in\left(\operatorname{span}\left\{e_{1}, \ldots, e_{n+m}\right\}\right)^{\perp}=\mathbb{P}_{n+m+1}.
\end{equation}
We conclude the proof by showing $\xi=\xi_1$, that is $\xi_2\equiv0$. Suppose, $\xi\neq0$. Then, using  \eqref{eq3.10-a} and \eqref{eq3.53}, we obtain
\begin{align}\label{eq3.54}
	\lambda_n< \lambda_{n+m+1}&= \min_{{u \in \mathbb{P}_{n+m+1}}\setminus\{0\}} \frac{ \int_{\mathbb{G} \times \mathbb{G}}  \frac{|u(x)-u(y)|^2}{|y^{-1}x|^{Q+2s}}dxdy}{\int_{\Omega} |u(x)|^2\,dx} \nonumber\\
	& \leq \frac{ \int_{\mathbb{G} \times \mathbb{G}}  \frac{|u(x)-u(y)|^2}{|y^{-1}x|^{Q+2s}}dxdy}{\int_{\Omega} |u(x)|^2\,dx} \nonumber\\
	&=\frac{\|\xi_{2}\|^{2}}{\|\xi_{2}\|_{L^{2}(\Omega)}^{2}}.
\end{align}
Therefore, from \eqref{eq3.46}, \eqref{eq3.49}, \eqref{eq3.51} and \eqref{eq3.54}, we deduce that
\begin{align}\label{eq3.55}
	\lambda_n\|\xi\|_{L^{2}(\Omega)}^{2}&=\|\xi_{1}\|^{2}+\|\xi_{2}\|^{2} \nonumber\\
	&>\lambda_n\|\xi_{1}\|_{L^{2}(\Omega)}^{2}+\lambda_{n}\|\xi_{2}\|_{L^{2}(\Omega)}^{2}\nonumber \\
	&=\lambda_n\|\xi\|_{L^{2}(\Omega)}^{2}
\end{align}
Thus, we arrive at a contradiction. Therefore, $\xi_2\equiv0$. Hence, from \eqref{eq3.43}, we conclude that
\begin{equation*}
	\xi=\xi_{1} \in \operatorname{span}\left\{e_n, \ldots, e_{n+m}\right\},
\end{equation*}
completing the proof.
\end{proof}

\section{Bifurcation and multiplicity of solutions} \label{s4}
In this section, we prove one of the main results (Theorem \ref{mainthm}) of this paper concerning the bifurcation and multiplicity. We recall from the following abstract multiplicity result for critical points of an even, $C^1$ functional.  

\begin{theorem} \label{critithm}
     Let $\mathcal{H}$ be a real Hilbert space with norm $\|\cdot\|$ and $ \mathfrak{I} \in C^{1}(\mathcal{H}, \mathbb{R})$ be a functional on $\mathcal{H}$ satisfying the following assumptions:
\begin{itemize}
    \item[(a)] $\mathfrak{I}(0)=0, \mathfrak{I}(u)=\mathfrak{I}(-u) \quad$ for any $u \in \mathcal{H}$;
    \item[(b)] there exists $\beta>0$ such that the Palais-Smale condition for $\mathfrak{I}$ holds in $( 0, \beta)$;
    \item[(c)] there exist two closed subspaces $\mathcal{V}, \mathcal{W} \subset \mathcal{H}$ and positive constants $\rho, \delta, \beta^{\prime}$, with $\delta<\beta^{\prime}<\beta$ such that
    \begin{itemize}
        \item[(i)] $\mathfrak{I}(u) \leq \beta^{\prime}$ for any $u \in \mathcal{W},$
        \item[(ii)]  $\mathfrak{I}(u) \geq \delta$ for any $u \in \mathcal{V},\|u\|=\rho,$
        \item[(iii)] codim $\mathcal{V}<+\infty$ and $\operatorname{dim} \mathcal{W} \geq \operatorname{codim} \mathcal{V}$.
    \end{itemize}
\end{itemize}
 Then, there exists at least $
\operatorname{dim} \mathcal{W}-\operatorname{codim} \mathcal{V}
$
pairs of critical points of the functional $\mathfrak{I}$ with critical values belonging to the interval $\left[\delta, \beta^{\prime}\right]$.

\end{theorem}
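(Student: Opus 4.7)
\smallskip
\noindent\textbf{Proof plan for Theorem \ref{critithm}.} Set $k := \operatorname{codim}\mathcal{V}$ and $m := \dim \mathcal{W}$, so that (c)(iii) forces $k \leq m < \infty$ and the target is $m-k$ pairs of critical points. The approach is the classical pseudo-index minimax of Benci, as adapted by Bartolo--Benci--Fortunato to handle the truncated Palais--Smale hypothesis (b). The main ingredients are the Krasnoselskii genus $\gamma$ on the class $\Sigma$ of closed symmetric subsets of $\mathcal{H}\setminus\{0\}$, an equivariant deformation lemma (available because $\mathfrak{I}$ is even), and a linking argument between $\mathcal{V}$ and $\mathcal{W}$ forced by the dimension inequality.

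Fix $R>\rho$ and put $\mathcal{D} := \mathcal{W}\cap\overline{B_R(0)}$, $\mathcal{S} := \mathcal{W}\cap \partial B_R(0)$. Let $\mathcal{F}$ denote the class of odd continuous maps $h:\mathcal{D}\to\mathcal{H}$ that agree with the identity on $\mathcal{S}$. For $A\in\Sigma$ introduce the pseudo-index
\begin{equation*}
i^{\ast}(A) := \min_{h\in\mathcal{F}} \gamma\bigl(h^{-1}(A)\cap\mathcal{D}\bigr),
\end{equation*}
and, for $j=1,\dots,m-k$, the minimax levels
\begin{equation*}
c_j := \inf_{\substack{A\in\Sigma\\ i^{\ast}(A)\geq j}} \, \sup_{u\in A}\, \mathfrak{I}(u).
\end{equation*}
For the upper bound, the admissible set $A = \mathcal{D}$ is symmetric with $i^{\ast}(\mathcal{D}) = m$, hence by (c)(i)
\begin{equation*}
c_j \leq \sup_{u\in\mathcal{D}} \mathfrak{I}(u) \leq \sup_{u\in\mathcal{W}} \mathfrak{I}(u) \leq \beta' < \beta.
\end{equation*}
For the lower bound, the key linking claim is: every $A\in\Sigma$ with $i^{\ast}(A)\geq j$ satisfies $\gamma\bigl(A\cap\mathcal{V}\cap\partial B_\rho\bigr)\geq j$. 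Granted this, $A\cap\mathcal{V}\cap\partial B_\rho\neq\emptyset$ for $j\geq 1$, and (c)(ii) yields $c_j\geq\delta$. The claim is obtained by taking an odd retraction onto $\mathcal{V}\cap\partial B_\rho$ composed with the identity on $\mathcal{S}$, combining a Borsuk--Ulam type intersection on $\mathcal{W}$ with the genus property that any closed linear subspace of codimension $k$ must meet every $A\in\Sigma$ with $\gamma(A)>k$.

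All $c_j$ lie in $[\delta,\beta']\subset(0,\beta)$, so (b) applies. The standard equivariant deformation lemma for even $C^1$ functionals (built from an odd locally Lipschitz pseudo-gradient vector field, truncated outside $\mathfrak{I}^{-1}([\delta/2,\beta'+\eta])$ to remain inside the PS window) shows that if $c_j$ is not a critical value, then for some small $\varepsilon>0$ there is an odd homeomorphism $\eta$ with $\eta(\mathfrak{I}^{c_j+\varepsilon})\subset\mathfrak{I}^{c_j-\varepsilon}$. Applying $\eta$ to a near-optimal $A$ and using the $\mathcal{F}$-invariance of $i^{\ast}$ contradicts the definition of $c_j$. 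When $c_j=c_{j+1}=\dots=c_{j+p-1}=c$, the critical set $K_c$ is compact (by PS at level $c$) and a classical argument using the continuity property of $\gamma$ on a symmetric neighborhood of $K_c$ and the deformation lemma yields $\gamma(K_c)\geq p$, hence $K_c$ contains at least $p$ pairs of critical points. Summing over distinct values in $\{c_1,\dots,c_{m-k}\}$ produces at least $m-k$ pairs with critical values in $[\delta,\beta']$.

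The principal obstacle is the linking step that underwrites $c_j\geq \delta$: constructing $i^{\ast}$ so that it simultaneously (a) is monotone and invariant under the odd deformations produced by the pseudo-gradient flow, (b) gives $i^{\ast}(\mathcal{D})=m$, and (c) forces intersection with $\mathcal{V}\cap\partial B_\rho$ of genus at least $j$. This requires a careful Borsuk--Ulam/codimension count that marries the finite-dimensional $\mathcal{W}$ with the finite-codimensional $\mathcal{V}$, using only the algebraic information $m\geq k$ and the geometric information (c)(i)--(ii). A secondary technical point is the truncation of the pseudo-gradient flow: since Palais--Smale is only assumed on $(0,\beta)$, the deformation must be confined to a compact subinterval $[\delta/2,\beta'+\eta]\subset(0,\beta)$, which is standard but must be handled explicitly to keep the argument rigorous.
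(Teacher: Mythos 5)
First, a point of reference: the paper does not prove Theorem \ref{critithm} at all --- it is explicitly \emph{recalled} from Bartolo--Benci--Fortunato \cite{BBF83} and used as a black box in Section \ref{s4}. So your attempt has to be measured against the argument in \cite{BBF83}, whose toolbox (Krasnoselskii genus, a pseudo-index, an equivariant deformation lemma confined to the window where the truncated Palais--Smale condition holds, and a genus estimate for degenerate minimax levels) you have correctly identified.

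However, your concrete construction of the pseudo-index is set up on the wrong side, and as a result the ``key linking claim'' that underwrites $c_j\geq\delta$ is false as stated. With your definition $i^{\ast}(A)=\min_{h\in\mathcal{F}}\gamma\bigl(h^{-1}(A)\cap\mathcal{D}\bigr)$, where every $h\in\mathcal{F}$ fixes $\mathcal{S}=\mathcal{W}\cap\partial B_R$ pointwise, take $A=\mathcal{S}$ itself: this is a closed symmetric subset of $\mathcal{H}\setminus\{0\}$, and $h^{-1}(\mathcal{S})\cap\mathcal{D}\supseteq\mathcal{S}$ for every $h$, so $i^{\ast}(\mathcal{S})\geq\gamma(\mathcal{S})=\dim\mathcal{W}\geq j$; yet $\mathcal{S}\subset\partial B_R$ with $R>\rho$ is disjoint from $\mathcal{V}\cap\partial B_\rho$, so $\gamma\bigl(\mathcal{S}\cap\mathcal{V}\cap\partial B_\rho\bigr)=0$. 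Worse, nothing in hypothesis (c) bounds $\mathfrak{I}$ from below on $\mathcal{W}$ (in the paper's application $\mathfrak{I}_\lambda\to-\infty$ along $\mathcal{W}$), so $c_j\leq\sup_{\mathcal{S}}\mathfrak{I}$ can be negative and the levels escape the interval $[\delta,\beta']$ entirely; the whole lower-bound half of the argument collapses. (A smaller issue of the same kind: $\mathcal{D}\ni 0$, so $\mathcal{D}\notin\Sigma$ and ``$i^{\ast}(\mathcal{D})=m$'' is not meaningful in your own framework.) The repair is exactly what \cite{BBF83} does: define the pseudo-index \emph{relative to} $S^{\ast}=\mathcal{V}\cap\partial B_\rho$, namely $i^{\ast}(A)=\min_{h\in\mathcal{H}^{\ast}}\gamma\bigl(h(A)\cap S^{\ast}\bigr)$, where $\mathcal{H}^{\ast}$ is the group of odd homeomorphisms of $\mathcal{H}$ equal to the identity on $\mathfrak{I}^{-1}\bigl(\mathbb{R}\setminus(0,\beta)\bigr)$. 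Then $i^{\ast}(A)\geq 1$ forces $A\cap S^{\ast}\neq\emptyset$ (take $h=\mathrm{id}$), hence $c_j\geq\delta$ by (c)(ii); the nontrivial intersection lemma is the estimate $i^{\ast}\bigl(\mathcal{W}\cap\overline{B_R}\bigr)\geq\dim\mathcal{W}-\operatorname{codim}\mathcal{V}$, which uses the dimension count you allude to together with the fact that the admissible $h$ cannot move points of $\mathcal{W}$ where $\mathfrak{I}\leq 0$. Beyond this structural error, the proposal remains a plan: the Borsuk--Ulam-type intersection lemma and the multiplicity argument at degenerate levels are named but not executed, so even after the repair the attempt does not constitute a proof.
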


We present the proof of Theorem \ref{mainthm} by applying the above abstract critical point theorem to the Euler-Lagrange functional associated with  the problem \eqref{pro1intro} is given by 
\begin{equation} \label{func1}
    \mathfrak{I}_{\lambda}(u):= \frac{1}{2} \int_{\G \times \G} \frac{|u(x)-u(y)|^2}{|y^{-1}x|^{Q+2s}} dx dy - \frac{1}{2_s^*} \int_{\Omega} |u(x)|^{2_s^*} dx - \frac{\lambda}{2} \int_{\Omega} |u(x)|^2\, dx.
\end{equation}
It is clear that $\mathfrak{I}_{\lambda}$ is of class $C^1(X_0^{s,2}(\Omega))$ and 
\begin{align} \label{derfuc1}
    \langle \mathfrak{I}_{\lambda}'(u), \phi \rangle &=  \int_{\G \times \G} \frac{(u(x)-u(y))(\phi(x)-\phi(y))}{|y^{-1}x|^{Q+2s}} dx dy -  \int_{\Omega} |u(x)|^{2_s^*-2} u(x) \phi(x)dx \\&\quad\quad\quad - \lambda \int_{\Omega} |u(x)| \phi(x)\, dx \nonumber
\end{align}
for any $u, \phi \in X_0^{s,2}(\Omega).$ It is evident from here that the critical points of the energy functional $\mathfrak{I}_\lambda$ are solutions to Problem \eqref{pro1intro}.

To apply Theorem \ref{critithm}, we need to verify all the assumptions of it. Noting that the functional $\mathfrak{I}_\lambda$ is even and $\mathfrak{I}_\lambda(0)=0,$ we conclude that assumption $(a)$ of Theorem \ref{critithm} is verified. 

Next for the assumption $(b),$ we need to show that the functional $\mathfrak{I}_\lambda$ satisfies the Palais-Smale (P-S) condition at some level $c \in \mathbb{R},$ that is, every sequence $(u_j)_{j \in \mathbb{N}}$ in $X_0^{s,2}(\Omega)$ such that \begin{equation}\label{ps1}
    \mathfrak{I}_\lambda(u_j) \rightarrow c \quad \text{as} \quad j \rightarrow \infty
\end{equation} and 
\begin{equation} \label{PS2}
    \sup \{|\langle \mathfrak{I}_\lambda'(u_j), \varphi \rangle|: \varphi \in X^s_0(\Omega), \|\varphi\|_{X^s_0(\Omega)}=1\} \rightarrow 0
\end{equation}
as $j \rightarrow \infty,$ admits a subsequence strongly convergent in $X_0^{s,2}(\Omega).$ 

We prove the following lemma for this purpose. First, let us denote
\begin{align}\label{bestcon}
    C_{2^*_s, \Om}:= \inf_{u \in X_0^{s,p}(\Om) \backslash \{0\}} \frac{\Bigg( \int_{\G \times \G} \frac{|u(x)-u(y)|^2}{|y^{-1} x|^{Q+2s}} d x d y \Bigg)^{1/2}}{\|u\|_{L^{2_s^*(\Om)}} }
\end{align}
as the best constant of the Sobolev embedding $ X_0^{s,2}(\Omega) \hookrightarrow L^{2_s^*}(\Omega)$ given by \eqref{emG}.
\begin{lemma} \label{lemm4.1}
    For any value $c$ such that
    \begin{equation} \label{psss}
        c< \frac{s}{Q} C_{2_s^*, \Omega}^{Q/s} 
    \end{equation}
    the functional $\mathfrak{I}_\lambda$ satisfies the Palais-Smale condition at the level $c.$
\end{lemma}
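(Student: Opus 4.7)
The plan is to follow the standard Brezis-Nirenberg strategy for nonlocal critical problems, adapted to the subelliptic setting on $\G$. Let $(u_j) \subset X_0^{s,2}(\Omega)$ be a Palais-Smale sequence at level $c$, that is, $\mathfrak{I}_\lambda(u_j) \to c$ and $\mathfrak{I}_\lambda'(u_j) \to 0$ in the dual. First, I would establish boundedness. The identity $\mathfrak{I}_\lambda(u_j) - \tfrac{1}{2}\langle \mathfrak{I}_\lambda'(u_j), u_j\rangle = \tfrac{s}{Q}\|u_j\|_{L^{2_s^*}(\Omega)}^{2_s^*}$ gives $\|u_j\|_{L^{2_s^*}(\Omega)}^{2_s^*} \le M + \epsilon_j \|u_j\|_{X_0^{s,2}(\Omega)}$ with $\epsilon_j \to 0$. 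Combined with $\langle \mathfrak{I}_\lambda'(u_j), u_j\rangle = o(1)\|u_j\|_{X_0^{s,2}(\Omega)}$ and the H\"older estimate $\|u_j\|_{L^2(\Omega)}^2 \le |\Omega|^{2s/Q}\|u_j\|_{L^{2_s^*}(\Omega)}^2$ (valid since $\Omega$ is bounded), this yields an inequality of the form $\|u_j\|_{X_0^{s,2}(\Omega)}^2 \le C_1 + C_2 \|u_j\|_{X_0^{s,2}(\Omega)}^{2/2_s^*} + o(\|u_j\|_{X_0^{s,2}(\Omega)})$; since $2/2_s^* < 1$, boundedness follows.

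Next, I would extract a subsequence with $u_j \rightharpoonup u$ weakly in $X_0^{s,2}(\Omega)$, strongly in $L^q(\Omega)$ for all $q \in [1, 2_s^*)$, and pointwise a.e., via Theorem \ref{l-3}. Weak continuity of the Gagliardo bilinear form, together with boundedness of $(|u_j|^{2_s^*-2} u_j)$ in $L^{2_s^*/(2_s^*-1)}(\Omega)$ and a.e.\ convergence (hence weak $L^{2_s^*/(2_s^*-1)}$ convergence of this nonlinear term to $|u|^{2_s^*-2}u$), and strong $L^2$-convergence of $(u_j)$, show that $u$ is a weak solution of the limiting equation $\mathfrak{I}_\lambda'(u)=0$. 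Testing with $\varphi = u$ gives the Pohozaev-type identity $\mathfrak{I}_\lambda(u) = \tfrac{s}{Q}\|u\|_{L^{2_s^*}(\Omega)}^{2_s^*} \ge 0$, which is crucial later.

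The heart of the argument is upgrading weak convergence to strong convergence by exploiting the threshold \eqref{psss}. Setting $w_j := u_j - u$, a nonlocal Brezis-Lieb decomposition on $\G \times \G$ yields both $[u_j]_{s,2,\G}^2 = [u]_{s,2,\G}^2 + [w_j]_{s,2,\G}^2 + o(1)$ and $\|u_j\|_{L^{2_s^*}(\Omega)}^{2_s^*} = \|u\|_{L^{2_s^*}(\Omega)}^{2_s^*} + \|w_j\|_{L^{2_s^*}(\Omega)}^{2_s^*} + o(1)$. Inserting these into the two Palais-Smale relations (using $\mathfrak{I}_\lambda'(u)=0$ and the strong $L^2$-convergence) produces $[w_j]_{s,2,\G}^2 - \|w_j\|_{L^{2_s^*}(\Omega)}^{2_s^*} \to 0$ and $\tfrac{s}{Q}[w_j]_{s,2,\G}^2 \to c - \mathfrak{I}_\lambda(u)$. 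Hence $[w_j]_{s,2,\G}^2 \to L$ and $\|w_j\|_{L^{2_s^*}(\Omega)}^{2_s^*} \to L$ with $L := \tfrac{Q}{s}(c - \mathfrak{I}_\lambda(u))$.

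Passing to the limit in the Sobolev inequality $C_{2_s^*,\Omega}^2 \|w_j\|_{L^{2_s^*}(\Omega)}^2 \le [w_j]_{s,2,\G}^2$ produces the dichotomy $L = 0$ or $L \ge C_{2_s^*,\Omega}^{Q/s}$. Combining $\mathfrak{I}_\lambda(u) \ge 0$ with the hypothesis $c < \tfrac{s}{Q}C_{2_s^*,\Omega}^{Q/s}$ yields $L \le \tfrac{Q}{s}c < C_{2_s^*,\Omega}^{Q/s}$, ruling out the second alternative. Therefore $L=0$, i.e.\ $u_j \to u$ strongly in $X_0^{s,2}(\Omega)$, proving the Palais-Smale condition. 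The main obstacle is the nonlocal Brezis-Lieb identity for the Gagliardo seminorm on $\G$. In the Euclidean fractional case one interprets $[\,\cdot\,]_{s,2}^2$ as a weighted $L^2$-norm on $\mathbb{R}^n \times \mathbb{R}^n$ and invokes the classical Brezis-Lieb lemma combined with a.e.\ convergence; the same scheme should transfer to $\G \times \G$ with the left-invariant distance $|y^{-1}x|$ and the Haar measure (a.e.\ convergence on $\G$ being automatic since $u_j = u = 0$ outside $\Omega$), but the transfer must be verified carefully to accommodate the non-commutative group structure.
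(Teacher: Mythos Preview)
Your proposal is correct and follows essentially the same approach as the paper's proof: boundedness via the identity $\mathfrak{I}_\lambda(u_j)-\tfrac12\langle\mathfrak{I}_\lambda'(u_j),u_j\rangle=\tfrac{s}{Q}\|u_j\|_{L^{2_s^*}}^{2_s^*}$, extraction of a weak limit $u$ solving the equation, the Pohozaev-type bound $\mathfrak{I}_\lambda(u)\ge 0$, Brezis--Lieb splitting for both the Gagliardo seminorm and the $L^{2_s^*}$ norm, and the dichotomy on the defect $L$ resolved by the threshold \eqref{psss}. One small simplification worth noting: since $p=2$, the ``nonlocal Brezis--Lieb'' identity for $[\cdot]_{s,2,\G}^2$ is just the Hilbert-space fact $\|u_j\|^2=\|u_j-u\|^2+\|u\|^2+2\langle u_j-u,u\rangle$ with the cross term vanishing by weak convergence, so no delicate transfer to the non-commutative setting is actually needed there.
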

\begin{proof}
    Let $(u_j)_{j \in \N}$ be a sequence satisfying the Palais-Smale conditions. We first claim that the sequence $(u_j)$ is bounded in $X_0^{s,2}(\Omega).$ Indeed, for any $j \in \N,$ it follows from \eqref{ps1} and \eqref{PS2} that  there exists $k>0$ such that 
    \begin{equation} \label{eq3.4}
        |\mathfrak{I}_\lambda(u_j)| \leq k
    \end{equation}
    and 
    $$ \left|\langle \mathfrak{I}_\lambda'(u_j), \frac{u_j}{\|u_j\|_{X^s_0(\Omega)}} \rangle \right| \leq k.$$
    Therefore, we have 
    \begin{equation} \label{2.7eq}
        \mathfrak{I}_\lambda(u_j)- \frac{1}{2} \langle \mathfrak{I}_\lambda'(u_j), u_j \rangle \leq k (1+\|u_j\|_{X^s_0(\Omega)}).
    \end{equation}
    Furthermore, we deduce from \eqref{func1} and \eqref{derfuc1} that 
    $$ \mathfrak{I}_\lambda(u_j)- \frac{1}{2} \langle \mathfrak{I}_\lambda'(u_j), u_j \rangle = \left( \frac{1}{2}-\frac{1}{2_s^*} \right) \|u_j\|_{L^{2_s^*}(\Omega)}^{2_s^*} = \frac{s}{Q} \|u_j\|_{L^{2_s^*}(\Omega)}^{2_s^*}. $$
    Thus, from \eqref{2.7eq} it follows that, for any $j \in \N$, we have
    \begin{equation} \label{eqq3.6}
        \|u_j\|_{L^{2_s^*}(\Omega)}^{2_s^*} \leq \tilde{k} (1+\|u_j\|_{X^s_0(\Omega)})
    \end{equation}
    for some suitable positive constant $\tilde{k}.$
    Now, we apply the H\"older inequality to get
    $$ \|u_j\|^2_{L^2(\Omega)} \leq |\Omega|^{2s/Q} \|u_j\|_{L^{2_s^*}(\Omega)}^2 \leq \tilde{k}^{2/2_s^*} |\Omega|^{2s/Q}(1+\|u_j\|_{X^s_0(\Omega)})^{2/2_s^*},$$
    and then use the fact $2_s^*>2$ to deduce that
    \begin{equation} \label{eq3.7}
        \|u_j\|^2_{L^2(\Omega)} \leq k_* (1+\|u_j\|_{X^s_0(\Omega)}),
    \end{equation} for some $k_*>0$, independent of $j.$
    Finally, putting together \eqref{eq3.4}, \eqref{eqq3.6} and \eqref{eq3.7}, we get
    \begin{align*}
        k &\geq \mathfrak{I}_\lambda(u_j)= \frac{1}{2} \int_{\G \times \G} \frac{|u_j(x)-u_j(y)|^2}{|y^{-1}x|^{Q+2s}} dx dy - \frac{1}{2_s^*} \int_{\Omega} |u_j(x)|^{2_s^*} dx - \frac{\lambda}{2} \int_{\Omega} |u_j(x)|^2\, dx \\& \geq \frac{1}{2} \|u_j\|_{X^s_0(\Omega)}^2- \bar{k} (1+\|u_j\|_{X^s_0(\Omega)}),
    \end{align*}
    for some $\bar{k}>0$, independent of $j,$ showing that $(u_j)$ is bounded a sequence in $X_0^{s,2}(\Omega).$

    Now, we will show that there exists $u_\infty \in X_0^{s,2}(\Omega)$ which is a solution of \eqref{pro1intro}. 
    Since $(u_j)$ is bounded in the Hilbert space $X_0^{s,2}(\Omega)$, there exists a subsequence, again denoted by $(u_j),$ such that $u_j \rightarrow u_\infty$ weakly for some $u_\infty \in X^s_0(\Omega)$, that is, 
    \begin{align} \label{eqq39}
        \int_{\G \times \G} \frac{(u_j(x)-u_j(y))(\phi(x)-\phi(y))}{|y^{-1}x|^{Q+2s}}dx dy \rightarrow \int_{\G \times \G} \frac{(u_\infty(x)-u_\infty(y))(\phi(x)-\phi(y))}{|y^{-1}x|^{Q+2s}}dx dy
    \end{align}for any $\phi \in X^s_0(\Omega),$ as $j \rightarrow \infty.$
 
 Further, using the embedding result (Theorem \ref{l-3}) of $X_0^{s,2}(\Omega)$ into the Lebesgue spaces and the fact that $L^{2_s^*}(\Omega)$ is a reflexive space along with the boundedness of $(u_j)$ in $X_0^{s,2}(\Omega)$ and \eqref{eqq3.6} we obtain, up to a subsequence, that  
\begin{align}
    u_j &\rightarrow u_\infty\quad \text{weakly in}\,\,\, L^{2_s^*}(\Omega)\label{eq310}\\
     u_j &\rightarrow u_\infty \,\,\,\text{in}\,\,L^2(\Omega)\label{eqq31.1}~\text{and}\\
     u_j &\rightarrow u_\infty \quad \text{a.e in} \,\,\, \Omega\label{eq310'}~\text{as}~j \rightarrow \infty.
\end{align}

Since $(u_j)$ is a bounded in $X_0^{s,2}(\Omega)$ and \eqref{eqq3.6} holds, we can conclude that $\|u_j\|_{L^{2^*_s}(\Omega)}$ is bounded uniformly in $j.$ This implies that the sequnece $(|u_j|^{2_s^*-2}u_j)$ is bounded in $L^{2_s^*/ 2_s^*-1}(\Omega)$ uniformly in $j.$ 
Therefore, from \eqref{eq310} we obtain
\begin{equation}\label{eqqpn}
    |u_j|^{2_s^*-2}u_j \rightarrow |u_\infty|^{2_s^*-2}u_\infty\quad \text{weakly in} \,\,L^{2_s^*/ 2_s^*-1}(\Omega)~\text{as}~j \rightarrow \infty.
\end{equation}
Thus, for all $\phi \in L^{2_s^*}(\Omega)=(L^{2_s^* / 2_s^*-1}(\Omega))^{'}$ we have that 
\begin{equation}
    \int_{\Omega} |u_j|^{2_s^*-2}u_j \phi(x) dx \rightarrow \int_{\Omega} |u_\infty|^{2_s^*-2}u_\infty \phi(x) dx~\text{as}~j \rightarrow \infty.
\end{equation}
In particularly, as $X_0^{s,2}(\Omega) \subseteq L^{2_s^*}(\Omega)$ gives that 
\begin{equation} \label{eqq3.14}
    \int_{\Omega} |u_j|^{2_s^*-2}u_j \phi(x) dx \rightarrow \int_{\Omega} |u_\infty|^{2_s^*-2}u_\infty \phi(x) dx, ~\text{for every}~\phi \in X_0^{s,2}(\Omega).
\end{equation}

Recalling that $(u_j)_{j}$ is a Palais-Smale sequence, for any $\phi \in X_{0}^s(\Omega)$
 \begin{align*}
    o(1)&=  \langle \mathfrak{I}_{\lambda}'(u), \phi \rangle \\&=  \int_{\G \times \G} \frac{(u(x)-u(y))(\phi(x)-\phi(y))}{|y^{-1}x|^{Q+2s}} dx dy -  \int_{\Omega} |u(x)|^{2_s^*-2} u(x) \phi(x)dx - \lambda \int_{\Omega} |u(x)| \phi(x)\, dx
 \end{align*}
 as $j \rightarrow \infty.$ Now, letting $j \rightarrow \infty$ and applying \eqref{eqq39}, \eqref{eqq31.1} and \eqref{eqq3.14} we find that 
 \begin{align} \label{eqqw}
  \nonumber   & \int_{\G \times \G} \frac{(u_\infty(x)-u_\infty(y))(\phi(x)-\phi(y))}{|y^{-1}x|^{Q+2s}} dx dy -  \int_{\Omega} |u_\infty(x)|^{2_s^*-2} u_\infty(x) \phi(x)dx \\&\quad - \lambda \int_{\Omega} |u_\infty(x)| \phi(x)\, dx=0.
 \end{align}
 This implies that $u_\infty \in X_0^{s,2}(\Omega)$ is a weak solution of problem \eqref{pro1intro}. We now show that 
\begin{align}
    u_j \rightarrow u_\infty \quad \text{in} \quad X_0^{s,2}(\Omega).
\end{align}
Let us deduce the following three important relations for the function $u_\infty \in X_0^{s,2}(\Omega).$ 
\begin{align} \label{fim}
    \mathfrak{I}_\lambda (u_\infty)= \frac{s}{Q} \int_{\Omega} |u_\infty|^{2_s^*} dx \geq 0. 
\end{align}
Indeed, taking $u_\infty \in X_0^{s,2}(\Omega)$ as a test function in \eqref{eqqw} we have
$$\int_{\G \times \G} \frac{|u_\infty(x)-u_\infty(y)|^2}{|y^{-1}x|^{Q+2s}} dx dy  - \lambda \int_{\Omega} |u_\infty(x)|^2  dx=\int_{\Omega} |u_\infty(x)|^{2_s^*} dx ,$$
which further implies that 
$$\mathfrak{I}_\lambda(u_\infty)= \left( \frac{1}{2}-\frac{1}{2^*} \right) \int_{\Omega} |u_\infty(x)|^{2_s^*}\,dx = \frac{s}{Q} \int_{\Omega} |u_\infty(x)|^{2_s^*}\,dx \geq 0,$$
proving \eqref{fim}. The second important relation is 
\begin{align}\label{sim}
 \nonumber   \mathfrak{I}_{\lambda}(u_j)&=\mathfrak{I}_{\lambda}(u_\infty)-\frac{1}{2^*_s} \int_{\Omega} |u_j(x)-u_\infty(x)|^{2_s^*}\,dx \\&+ \frac{1}{2}\int_{\G \times \G} \frac{|u_j(x)-u_\infty(x)-u_j(y)+u_\infty(y)|^2}{|y^{-1}x|^{Q+2s}} dx dy+o(1).
\end{align}
To see this, we apply \eqref{eq310} and \eqref{eqq3.14} along with Brezis-Lieb Lemma \cite[Theorem 1]{BL83} and Theorem \ref{l-3} to get, as $j \rightarrow \infty,$
\begin{align} \label{pn}
   \nonumber \int_{\G \times \G} \frac{|u_j(x)-u_j(y)|^2}{|y^{-1}x|^{Q+2s}} dx dy=& \int_{\G \times \G} \frac{|u_j(x)-u_\infty(x)-u_j(y)+u_\infty(y)|^2}{|y^{-1}x|^{Q+2s}} dx dy \\&+\int_{\G \times \G} \frac{|u_\infty(x)-u_\infty(y)|^2}{|y^{-1}x|^{Q+2s}} dx dy+o(1)
\end{align}
and 
\begin{align} \label{pnn}
    \int_{\Omega} |u_j(x)|^{2_s^*}\,dx = \int_{\Omega} |u_j(x)-u_\infty(x)|^{2_s^*}\,dx+ \int_{\Omega} |u_\infty(x)|^{2_s^*}\,dx+o(1).
\end{align}
The above two equations \eqref{pn} and \eqref{pnn} combined with \eqref{eqq31.1} yield that
\begin{align*}
    \mathfrak{I}_\lambda(u_j)&= \frac{1}{2} \int_{\G \times \G} \frac{|u_j(x)-u_\infty(x)-u_j(y)+u_\infty(y)|^2}{|y^{-1}x|^{Q+2s}} dx dy+\frac{1}{2} \int_{\G \times \G} \frac{|u_\infty(x)-u_\infty(y)|^2}{|y^{-1}x|^{Q+2s}} dx dy\\&- \frac{1}{2_s^*}  \int_{\Omega} |u_j(x)-u_\infty(x)|^{2_s^*}\,dx-\frac{1}{2_s^*} \int_{\Omega} |u_\infty(x)|^{2_s^*}\,dx-\frac{\lambda}{2} \int_{\Omega} |u_\infty(x)|^2\,dx+o(1) \\&
    =\mathfrak{I}_{\lambda}(u_\infty)- \frac{1}{2_s^*}  \int_{\Omega} |u_j(x)-u_\infty(x)|^{2_s^*}\,dx\\&\quad\quad+\frac{1}{2}  \int_{\G \times \G} \frac{|u_j(x)-u_\infty(x)-u_j(y)+u_\infty(y)|^2}{|y^{-1}x|^{Q+2s}} dx dy+o(1)
\end{align*}
as $j\rightarrow \infty$ proving the related as desired. The final relation is, as $j \rightarrow \infty,$
\begin{align}\label{tim}
    \int_{\G \times \G} \frac{|u_j(x)-u_\infty(x)-u_j(y)+u_\infty(y)|^2}{|y^{-1}x|^{Q+2s}} dx dy =\int_{\Omega} |u_j(x)-u_\infty(x)|^{2_s^*}\,dx+o(1).
\end{align}
To prove this, we use \eqref{eq310}, \eqref{eqqpn} and \eqref{pnn} to first obtain that, as $j \rightarrow \infty,$
\begin{align}\label{A3}
 \nonumber   \int_{\Omega} &\left( |u_j(x)|^{2_s^*-2} u_j(x)- |u_\infty(x)|^{2_s^*-2} u_\infty(x)\right) (u_j(x)-u_\infty(x))\,dx \\=& \int_{\Omega} |u_j(x)|^{2_s^*}\,dx-\int_{\Omega} |u_\infty|^{2_s^*-2} u_\infty(x) u_j(x)\,dx-\int_{\Omega} |u_j|^{2_s^*-2} u_j(x) u_\infty(x)\,dx\nonumber \\&+\int_{\Omega} |u_\infty(x)|^{2_s^*}\,dx \nonumber
 \\&=\nonumber \int_{\Omega} |u_j(x)|^{2_s^*}\,dx -\int_{\Omega} |u_\infty(x)|^{2_s^*}dx+o(1) \\& = \int_{\Omega} |u_j(x)-u_\infty(x)|^{2_s^*}\,dx+o(1).
\end{align}
Next, using the fact $(u_j)$ is a PS sequence (particularly \eqref{PS2}) and $(u_j)$ is a bounded sequence with the fact that $u_\infty$ is a solution of \eqref{pro1intro}, we have 
\begin{equation} \label{eqq325}
    o(1)=\langle \mathfrak{I}_\lambda'(u_j), u_j-u_\infty  \rangle =\langle \mathfrak{I}_\lambda'(u_j)-\mathfrak{I}_\lambda'(u_\infty), u_j-u_\infty  \rangle
\end{equation}
as $j \rightarrow \infty.$
On the other hand, by \eqref{A3} and \eqref{eqq31.1}, it follows that
\begin{align} \label{eqq326}
  \nonumber  \langle \mathfrak{I}_\lambda^{'}(u_j)-\mathfrak{I}_\lambda^{'}(u_\infty), u_j-u_\infty  \rangle &= \int_{\G \times \G} \frac{|u_j(x)-u_\infty(x)-u_j(y)+u_\infty(y)|^2}{|y^{-1}x|^{Q+2s}} dx dy  \\&\nonumber -\lambda  \int_{\Omega} |u_j(x)-u_\infty(x)|^2\,dx \\& \nonumber -\int_{\Omega} \left( |u_j(x)|^{2_s^*-2} u_j(x)- |u_\infty(x)|^{2_s^*-2} u_\infty(x)\right) (u_j(x)-u_\infty(x))\,dx \\&\nonumber =\int_{\G \times \G} \frac{|u_j(x)-u_\infty(x)-u_j(y)+u_\infty(y)|^2}{|y^{-1}x|^{Q+2s}} dx dy\\&- \int_{\Omega} |u_j(x)-u_\infty(x)|^{2_s^*}\,dx+o(1)
\end{align} as $j \rightarrow \infty.$
Finally, \eqref{eqq325} together with \eqref{eqq326} provide the required relation \eqref{tim}. Combining \eqref{sim} and \eqref{tim} we get
\begin{align*}
     \mathfrak{I}_{\lambda}(u_j)&=\mathfrak{I}_{\lambda}(u_\infty)-\frac{1}{2^*_s} \int_{\Omega} |u_j(x)-u_\infty(x)|^{2_s^*}\,dx \\&+ \frac{1}{2}\int_{\G \times \G} \frac{|u_j(x)-u_\infty(x)-u_j(y)+u_\infty(y)|^2}{|y^{-1}x|^{Q+2s}} dx dy+o(1) \\&= \mathfrak{I}_{\lambda}(u_\infty) +\left(\frac{1}{2}-\frac{1}{2_s^*} \right) \int_{\G \times \G} \frac{|u_j(x)-u_\infty(x)-u_j(y)+u_\infty(y)|^2}{|y^{-1}x|^{Q+2s}} dx dy+o(1)
     \\&= \mathfrak{I}_{\lambda}(u_\infty) +\frac{s}{Q} \int_{\G \times \G} \frac{|u_j(x)-u_\infty(x)-u_j(y)+u_\infty(y)|^2}{|y^{-1}x|^{Q+2s}} dx dy+o(1).
\end{align*}
This together with \eqref{ps1} gives, as $j \rightarrow \infty,$ that
\begin{align} \label{e327}
    c=\mathfrak{I}_{\lambda}(u_\infty) +\frac{s}{Q} \int_{\G \times \G} \frac{|u_j(x)-u_\infty(x)-u_j(y)+u_\infty(y)|^2}{|y^{-1}x|^{Q+2s}} dx dy+o(1).
\end{align}
As the sequence $(\|u_j\|_{X_0^{s,2}(\Omega)})$ is bounded in $\mathbb{R}$ by the fact that the sequence $(u_j)$ is bounded in $X_0^{s,2}(\Omega),$ we get 
\begin{equation}\label{eq328}
    \|u_j-u_\infty\|_{X_0^{s,2}(\Omega)}^2=\int_{\G \times \G} \frac{|u_j(x)-u_\infty(x)-u_j(y)+u_\infty(y)|^2}{|y^{-1}x|^{Q+2s}} dx dy \rightarrow S
\end{equation}
for some $S \in [0, \infty)$ as $j \rightarrow \infty,$ up to a subsequence, if necessary. Thus, \eqref{e327}  gives 
\begin{equation} \label{eqq329}
    c=\mathfrak{I}_{\lambda}(u_\infty) +\frac{s}{Q} S \geq \frac{s}{Q} S,
\end{equation}
as an application of \eqref{fim} and \eqref{eq328}. On the other hand, \eqref{tim}, \eqref{eq328} and \eqref{bestcon}  provide that 
\begin{equation}
   S^{\frac{1}{2_s^*}} C_{2_s^*, \Om} \leq S^{\frac{1}{2}} 
\end{equation} implying that
either $S=0$ or $C_{2_s^*, \Om}^{\frac{Q}{s}} \leq S.$ Suppose that $C_{2_s^*, \Om}^{\frac{Q}{s}} \leq S,$ then \eqref{eqq329} yields that
 \begin{equation*}
     c \geq \frac{s}{Q} S \geq \frac{s}{Q} C_{2_s^*, \Om}^{\frac{Q}{s}},
 \end{equation*}
 a contradiction to \eqref{psss}. Therefore, $S=0.$ As a consequence of \eqref{eq328} $$\|u_j-u_\infty\|_{X_0^{s,2}(\Omega)}=0$$ as $j \rightarrow \infty.$ This particularly shows that $(u_j)_{j}$ has a convergent subsequence converging to $u_\infty$ strongly. Hence, $\mathfrak{I}_\lambda$ satisfies the Palais-Smale condition at any level $c$ satisfying \eqref{psss}, completing the proof of this lemma. 
\end{proof}

Next, we establish the following result to verify the condition $(c)$ of Theorem \ref{critithm}. 
\begin{lemma} \label{lem4.2}
    The functional $\mathfrak{I}_\lambda$ satisfies the assumption $(c)$ of Theorem \ref{critithm}.
\end{lemma}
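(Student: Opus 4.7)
The plan is to choose, with $\lambda^*=\lambda_{k+1}$ of multiplicity $m$ so that $\lambda_k<\lambda_{k+1}=\cdots=\lambda_{k+m}<\lambda_{k+m+1}$, the closed subspaces
\[
\mathcal{W}:=\mathrm{span}\{e_1,\dots,e_{k+m}\}\qquad\text{and}\qquad \mathcal{V}:=\bigl(\mathrm{span}\{e_1,\dots,e_k\}\bigr)^{\perp}
\]
inside the Hilbert space $\mathcal{H}=X_0^{s,2}(\Omega)$, with the orthogonality in the sense of the inner product \eqref{eq3.6}. Condition (iii) is then immediate from Theorem \ref{ev-mainthm}(xi), which guarantees that $(e_n)$ is an orthogonal basis of $X_0^{s,2}(\Omega)$: one has $\mathrm{codim}\,\mathcal{V}=k<\infty$ and $\dim\mathcal{W}=k+m$, so that $\dim\mathcal{W}-\mathrm{codim}\,\mathcal{V}=m$, which is exactly the number of pairs of critical points predicted by Theorem \ref{critithm} and Theorem \ref{mainthm}.

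For condition (ii), I would expand $u\in\mathcal{V}$ as $u=\sum_{j\ge k+1}c_je_j$ and use Theorem \ref{ev-mainthm}(ix)-(xi) to obtain $\|u\|_{X_0^{s,2}}^2=\sum_{j\ge k+1}c_j^2\lambda_j\ge \lambda^*\|u\|_{L^2(\Omega)}^2$. Combining this with the critical Sobolev embedding \eqref{bestcon},
\[
\mathfrak{I}_\lambda(u)\ge \frac{1}{2}\Bigl(1-\tfrac{\lambda}{\lambda^*}\Bigr)\|u\|_{X_0^{s,2}}^2-\frac{1}{2_s^*\,C_{2_s^*,\Omega}^{\,2_s^*}}\|u\|_{X_0^{s,2}}^{\,2_s^*}.
\]
Since $\lambda<\lambda^*$, the function $t\mapsto \tfrac{1}{2}(1-\lambda/\lambda^*)t^2-C\,t^{2_s^*}$ is strictly positive on a right neighborhood of $0$, and choosing $\rho$ as its unique positive critical point yields a $\delta>0$ with $\mathfrak{I}_\lambda(u)\ge\delta$ for every $u\in\mathcal{V}$ with $\|u\|_{X_0^{s,2}}=\rho$.

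For condition (i), expanding $u\in\mathcal{W}$ in the basis $\{e_1,\dots,e_{k+m}\}$ and invoking the orthogonality relations once more gives the reverse inequality $\|u\|_{X_0^{s,2}}^2\le\lambda^*\|u\|_{L^2(\Omega)}^2$. On the other hand, H\"older's inequality yields $\|u\|_{L^{2_s^*}(\Omega)}\ge |\Omega|^{-s/Q}\|u\|_{L^2(\Omega)}$, so that
\[
\mathfrak{I}_\lambda(u)\le \frac{\lambda^*-\lambda}{2}\|u\|_{L^2(\Omega)}^2-\frac{|\Omega|^{-\frac{2s}{Q-2s}}}{2_s^*}\|u\|_{L^2(\Omega)}^{\,2_s^*}.
\]
Maximizing the right-hand side in $t=\|u\|_{L^2(\Omega)}$ produces an explicit upper bound $\beta'=\tfrac{s}{Q}(\lambda^*-\lambda)^{Q/(2s)}|\Omega|$, valid uniformly on $\mathcal{W}$.

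The main obstacle, and the reason the hypothesis \eqref{eqq1.12} has precisely the stated form, is to guarantee the ordering $\delta<\beta'<\beta$ in Theorem \ref{critithm}, where $\beta=\tfrac{s}{Q}C_{2_s^*,\Omega}^{Q/s}$ is the Palais-Smale threshold furnished by Lemma \ref{lemm4.1}. A short computation shows that $\beta'<\beta$ is equivalent to $\lambda^*-\lambda<C_{2_s^*,\Omega}^{\,2}|\Omega|^{-2s/Q}$, which is exactly the bifurcation window \eqref{eqq1.12}. The remaining inequality $\delta<\beta'$ follows by taking $\rho$ (hence $\delta$) as small as needed; this is automatic because $\delta\to 0$ as $\rho\to 0^+$, while $\beta'$ is independent of $\rho$. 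Putting these three verifications together establishes (c), and letting $\lambda\to(\lambda^*)^-$ forces the critical values in $[\delta,\beta']$ to vanish, which will yield the decay $\|u_{\lambda,i}\|_{X_0^{s,2}}\to 0$ in Theorem \ref{mainthm}.
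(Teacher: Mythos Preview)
Your proof is correct and follows essentially the same route as the paper: the same linking subspaces $\mathcal{V}$ and $\mathcal{W}$ (up to a harmless shift in the index $k$), the same eigenfunction expansion to obtain $\|u\|_{X_0^{s,2}}^2\le \lambda^*\|u\|_{L^2}^2$ on $\mathcal{W}$ and the reverse inequality on $\mathcal{V}$, and the same explicit $\beta'=\tfrac{s}{Q}(\lambda^*-\lambda)^{Q/(2s)}|\Omega|$. The only cosmetic difference is that in condition (i) you apply H\"older in the form $\|u\|_{L^{2_s^*}}\ge |\Omega|^{-s/Q}\|u\|_{L^2}$ and optimize in $t=\|u\|_{L^2}$, whereas the paper applies it as $\|u\|_{L^2}^2\le |\Omega|^{2s/Q}\|u\|_{L^{2_s^*}}^2$ and optimizes in $\|u\|_{L^{2_s^*}}$; both computations yield the same $\beta'$, and your exponents $C_{2_s^*,\Omega}^{\,2_s^*}$ and $C_{2_s^*,\Omega}^{\,2}$ are in fact the correct ones.
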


\begin{proof}
    Assume $\lambda^*$ satisfies \eqref{eqq1.11} as in the statement of Theorem \ref{mainthm}. Then
    $$\lambda^*=\lambda_k\,\quad\text{for some}\,\, k \in \mathbb{N}.$$
    By hypothesis, $m$ is the multiplicity of $\lambda^*.$ Thus, we have 
    $$\lambda^*=\lambda_1=\lambda_2 \quad k=1$$
    \begin{equation} \label{eqq4.32}
        \lambda_{k-1}<\lambda^*=\lambda_k=\cdots = \lambda_{k+m-1}<\lambda_{k+m} \quad \text{if} \,\,\, k \geq 2.
    \end{equation}
    Let us first note that, under condition \eqref{eqq1.12}, the parameter $\lambda$ is such that 
    \begin{equation} \label{eqq4.33}
        \lambda>0.
    \end{equation}
    To see this, using the definition of $\lambda^*$ and taking into account Theorem \ref{ev-mainthm}(v) , we obtain 
    \begin{equation} \label{eqq4.34}
        \lambda^* \geq \lambda_1.
    \end{equation}
    On the other hand, the characterization of the first eigenvalue of $\lambda_1$ gives that (see \eqref{eigva})
    \begin{equation} \label{eigva1}
     \lambda_1:=\inf_{u \in X_0^{s,2}(\Omega) \backslash \{0\}} \frac{ \int_{\G \times \G}  \frac{|u(x)-u(y)|^2}{|y^{-1}x|^{Q+2s}}dxdy}{\int_{\Omega} |u(x)|^2\,dx}. 
 \end{equation}
 Now, making use of H\"older inequality 
 we have 
$$\int_{\Omega} |u(x)|^2 dx \leq |\Omega|^{2s/Q} \left( \int_{\Omega} |u(x)|^{2_s^*} dx\right)^{2/2^*_s}$$ and therefore, \eqref{eigva1} gives
$\lambda_1 \geq C_{2_s^*, Q} |\Omega|^{-2s/Q}.$
Thus \eqref{eqq4.34} yields that 
\begin{equation} \label{eqqex}
    \lambda^* \geq C_{2_s^*, Q} |\Omega|^{-2s/Q}.
\end{equation}
Therefore, \eqref{eqqex} along with condition \eqref{eqq1.12}, gives the desired conclusion $\lambda>0.$

Utilizing the notation of abstract  abstract multiplicity  Theorem \ref{critithm}, we define
$$\mathcal{W}:=\text{span}\{e_1, e_2, \ldots, e_{k+m-1}\}$$
and 
$$\mathcal{V}:= \begin{cases}
    X_0^{s,2}(\Omega) & \text{if}\,\, k=1
   \\
   \{u \in X_0^{s,2}(\Omega): \langle u, e_j \rangle_{X_0^{s,2}(\Omega) }=0 \quad \forall j=1,2,\ldots, k-1 \} & \text{if}\,\, k \geq 2.
\end{cases}$$

Note that $\mathcal{V}$ and $\mathcal{W}$ are closed subset of $X_0^{s,2}(\Omega) $ and 
\begin{equation}
    \text{dim} \mathcal{W}=k+m-1\quad\quad 
    \text{codim} \mathcal{V}=k-1,  
\end{equation} so that Theorem \ref{critithm}(c)(iii) satisfied. Now, we will show that $\mathfrak{I}_\lambda$ possess the geometric conditions (c)(i)-(ii) of Theorem \ref{critithm}. For this purpose, suppose that $u \in \mathcal{W}.$ Then 
$$u(x)= \sum_{i=1}^{k+m-1} u_i e_i(x),$$
where $u_i \in \mathbb{R}$ and $i=1,2, \ldots, k+m-1.$ By Theorem \ref{ev-mainthm} (vii),  $(e_k)_{k \in \N}$  is an orthonormal basis of $L^2(\Omega)$ and an orthogonal basis of $X_0^{s,2}(\Omega).$  Thus, taking \eqref{eqq4.32} into account, we obtain 
$$\|u\|_{X_0^{s,2}(\Omega)}^2= \sum_{i=1}^{k+m-1} u_i^2 \|e_i\|_{X_0^{s,2}(\Omega)}^2=  \sum_{i=1}^{k+m-1} u_i^2 \lambda_i \leq \lambda_k \sum_{i=1}^{k+m-1} u_i^2 \lambda_k \|u\|_{L^2(\Omega)}^2=\lambda^*  \|u\|_{L^2(\Omega)}^2 $$
 and consequently using H\"older inequality we get 
 \begin{align} \label{eq4.38}
       \mathfrak{I}_{\lambda}(u) &:= \frac{1}{2} \int_{\G \times \G} \frac{|u(x)-u(y)|^2}{|y^{-1}x|^{Q+2s}} dx dy - \frac{1}{2_s^*} \int_{\Omega} |u(x)|^{2_s^*} dx - \frac{\lambda}{2} \int_{\Omega} |u(x)|^2\, dx 
   \nonumber    \\&\leq \frac{1}{2} (\lambda^*-\lambda) \int_{\Omega} |u(x)|^2\,dx - \frac{1}{2_s^*} \int_{\Omega} |u(x)|^{2_s^*} dx \nonumber \\&\leq  \frac{1}{2} (\lambda^*-\lambda) |\Omega|^{\frac{2s}{Q}} \left( \int_{\Omega} |u(x)|^{2_s^*}\,dx \right)^{\frac{2}{2^*}} - \frac{1}{2_s^*} \int_{\Omega} |u(x)|^{2_s^*} dx. 
 \end{align}

Next, for $t\geq 0$, we define a function

$$h(t)= \frac{1}{2}(\lambda^*-\lambda) |\Omega|^{\frac{2s}{Q}} t-t^{2_s^*-1}.$$ It is easy to note that $g'(t) \geq 0$ if and only if 
$$t \leq \tilde{t}= \left[(\lambda^*-\lambda)|\Omega|^{\frac{2s}{Q}} \right]^{1/(2_s^*-2)}.$$
Therefore, $h$ attained maxima at $t=\tilde{t}$ and so, for any $t\geq 0$
\begin{equation} \label{eq4.39}
    h(t) \leq \max_{t\geq 0}h(t) =h(\tilde{t})= \frac{s}{Q} (\lambda^*-\lambda)^{\frac{Q}{2s}}|\Omega|.
\end{equation}

Thus, from \eqref{eq4.38} and \eqref{eq4.39} we get 
\begin{equation}
    \sup_{u \in \mathcal{W}} \mathfrak{I}_\lambda(u)= \max_{t\geq 0}h(t) = \frac{s}{Q} (\lambda^*-\lambda)^{\frac{Q}{2s}}|\Omega|.
\end{equation}
Note that as a result of condition \eqref{eqq1.12}, we have
$$ \frac{s}{Q} (\lambda^*-\lambda)^{\frac{Q}{2s}}|\Omega|>0.$$
Therefore, condition (c)(i) of Theorem \ref{critithm} is satisfied with $\beta'= \frac{s}{Q} (\lambda^*-\lambda)^{\frac{Q}{2s}}|\Omega|.$ Now, let us prove that $\mathfrak{I}_\lambda$ satisfies condition (c)(ii).  For every $u \in \mathcal{V},$ the inequality 
$$\|u\|^2_{X_0^{s,2}(\Omega)} \geq \lambda^* \|u\|^2_{L^2(\Omega)} $$ holds. To see this, first note that, if $u=0$ then the inequality trivially holds. Now, for $u \in \mathcal{V} \backslash \{0\}$ it follows from the variational characterization of $\lambda^*=\lambda_k$ given by 
$$\lambda_k= \inf_{u \in \mathcal{V} \backslash \{0\}} \frac{ \int_{\G \times \G}  \frac{|u(x)-u(y)|^2}{|y^{-1}x|^{Q+2s}}dxdy}{\int_{\Omega} |u(x)|^2\,dx}$$
as discussed in Theorem \ref{ev-mainthm}. Thus, by the definition of $C_{2^*_s, \Omega}$ and taking into account that $\lambda>0$ (see \eqref{eqq4.33}), it follows that 
\begin{align}
    \mathfrak{I}_\lambda(u) &\geq \frac{1}{2} \left(1-\frac{\lambda}{\lambda^*} \right) \|u\|_{X_0^{s,2}(\Omega)}^2- \frac{1}{2_s^* C_{2_s^*, \Omega}^{\frac{1}{2}}} \|u\|^{2_s^*}_{X_0^{s,2}(\Omega)} \nonumber \\& = \|u\|_{X_0^{s,2}(\Omega)}^2 \left(\frac{1}{2} \left(1-\frac{\lambda}{\lambda^*} \right) - \frac{1}{2_s^* C_{2_s^*, \Omega}^{\frac{1}{2}}} \|u\|^{2_s^*-2} _{X_0^{s,2}(\Omega)}\right).
\end{align}
Now, let $u \in \mathcal{V}$ such that $\|u\|_{X_0^{s,2}(\Omega)}=\rho>0.$ Since $2_s^*>2$ we can choose $\rho$ sufficiently small, say $\rho \leq \tilde{\rho}$ with $\tilde{\rho}$ so that 
\begin{equation} \label{eq4.42}
    \frac{1}{2} \left(1-\frac{\lambda}{\lambda^*} \right) - \frac{1}{2_s^* C_{2_s^*, \Omega}^{\frac{1}{2}}} \|u\|^{2_s^*-2} _{X_0^{s,2}(\Omega)}>0
\end{equation}
 and 
\begin{align}\label{eq4.43}
    \rho^2 \left(\frac{1}{2} \left(1-\frac{\lambda}{\lambda^*} \right) - \frac{1}{2_s^* C_{2_s^*, \Omega}^{\frac{1}{2}}} \rho^{2_s^*-2} \right) <\rho^2 \left(\frac{1}{2} \left(1-\frac{\lambda}{\lambda^*} \right) \right)<\beta':=\frac{1}{Q} (\lambda^*-\lambda)^{Q/2}|\Omega|,
\end{align}
 concluding the proof of this lemma.
\end{proof}

Now, we present the proof of Theorem \ref{mainthm}.
\begin{proof}[Proof of Theorem \ref{mainthm}]
    As we have discussed already, $\mathfrak{I}_\lambda$ satisfies assumption $(a)$ of Theorem \ref{critithm}. Next, Lemma \ref{lemm4.1} make sure that $\mathfrak{I}_\lambda$ fulfill condition $(b)$ of Theorem \eqref{critithm} with $$\beta=\frac{s}{Q} C_{2_s^*, \Omega}^{Q/s}.$$
Further, by Lemma \ref{lem4.2} we get that $\mathfrak{I}_\lambda$ verifies condition $(c)$ such that 
$$\beta'= \frac{s}{Q} (\lambda^*-\lambda)^{\frac{Q}{2s}}|\Omega|~\text{and}~\rho=\tilde{\rho}$$
with
$$\delta= \rho^2 \left(\frac{1}{2} \left(1-\frac{\lambda}{\lambda^*} \right) - \frac{1}{2_s^* C_{2_s^*, \Omega}^{\frac{1}{2}}} \rho^{2_s^*-2} \right).$$
Note that $0<\delta<\beta'<\beta$, thanks to  conditions \eqref{eq4.42}, \eqref{eq4.43} and assumption \eqref{eqq1.12}. Therefore, as an application Theorem \ref{critithm} , the functional $\mathfrak{I}_\lambda$ has $m$ pairs $\{-u_{\lambda, i}, u_{\lambda, i}\}$ of critical points whose critical value $\mathfrak{I}_\lambda(\pm u_{\lambda, i})$ are such that 
\begin{equation}\label{eqq4.44}
    0<\delta \leq \mathfrak{I}_{\lambda}(\pm u_{\lambda, i}) \leq \beta'~\text{for any}~i=1,2,\ldots, m.
\end{equation}
Since $\mathfrak{I}_\lambda(0)=0$ and \eqref{eqq4.44} holds true, we note that these critical points are all different from the trivial function. This shows that the problem \eqref{pro1intro} admits $m$ pairs of nontrivial weak solutions.

Next, let us fix $i \in \{1, 2, \ldots, m\}.$ By \eqref{eqq4.44} we get 
\begin{align*}
    \beta' \geq \mathfrak{I}_\lambda(u_{\lambda, i})= \mathfrak{I}_\lambda(u_{\lambda, i}) - \frac{1}{2} \langle \mathfrak{I}_\lambda'(u_{\lambda, i}), u_{\lambda, i} \rangle= \left( \frac{1}{2}-\frac{1}{2_s^*} \right) \|u_{\lambda, i}\|_{L^{2_s^*}(\Omega)}^{2_s^*}= \frac{s}{Q} \|u_{\lambda, i}\|_{L^{2_s^*}(\Omega)}^{2_s^*}.
\end{align*}
Thus, passing to the limit as $\lambda \rightarrow \lambda^*,$ it follows that 
\begin{equation} \label{46}
    \|u_{\lambda, i}\|_{L^{2_s^*}(\Omega)}^{2_s^*} \rightarrow 0
\end{equation}
by noting $\beta' \rightarrow 0$ as $\lambda \rightarrow \lambda^*.$
    Then, using continuous embedding $L^{2_s^*} \hookrightarrow  L^2(\Omega)$  (since $\Omega$ is bounded), we immediately get
    \begin{equation}\label{47}
        \|u_{\lambda, i}\|_{L^{2}(\Omega)}^{2} \rightarrow 0\quad\quad \text{as}\,\,\,\lambda \rightarrow \lambda^*.
    \end{equation}
    Therefore, arguing as above, we get 
    $$\beta' \geq \mathfrak{I}_\lambda(u_{\lambda, i})= \frac{1}{2} \|u\|^2_{X_0^{s,2} (\Omega)}-\frac{\lambda}{2} \|u_{\lambda, i}\|^2_{L^2(\Omega)}-\frac{1}{2_s^*} \|u_{\lambda, i}\|^{2_s^*}_{L^{2_s^*}(\Omega)}$$
    which combined with \eqref{46} and \eqref{47} gives
    $$\|u\|_{X_0^{s,2} (\Omega)} \rightarrow 0\quad\text{as}\quad \lambda \rightarrow \lambda^*.$$
    This concludes the proof of Theorem \ref{mainthm}.
\end{proof}

\section*{Conflict of interest statement}
On behalf of all authors, the corresponding author states that there is no conflict of interest.

\section*{Data availability statement}
Data sharing is not applicable to this article as no datasets were generated or analysed during the current study.

\section*{Acknowledgement}
SG acknowledges the research facilities available at the Department of Mathematics, NIT Calicut under the DST-FIST support, Govt. of India [Project no. SR/FST/MS-1/2019/40 Dated. 07.01 2020.].   VK is supported by the FWO Odysseus 1 grant G.0H94.18N: Analysis and Partial Differential Equations, the Methusalem programme of the Ghent University Special Research Fund (BOF) (Grant number 01M01021) and by FWO Senior Research Grant G011522N.


\end{document}